\documentclass[11pt]{article}

\usepackage[pagebackref,colorlinks=true,urlcolor=blue,linkcolor=red,citecolor=red]{hyperref}

\usepackage{epsfig, graphics, graphicx}
\usepackage{subcaption, comment, bm}
\usepackage[percent]{overpic}
\usepackage{float}
\usepackage{amssymb,bm}
\usepackage{amsthm}
\usepackage{color}
\usepackage[]{amsmath}
\usepackage[]{amsfonts}
\usepackage[]{fancyhdr}
\usepackage[]{graphicx,wrapfig}
\usepackage{enumitem}
\usepackage[utf8]{inputenc}
\usepackage[T1]{fontenc}
\usepackage{mathtools}
\usepackage{array}
\usepackage{booktabs}
\usepackage{comment}
\usepackage{amsmath}
\usepackage{tikz}
\usepackage{epigraph}
\usepackage{lipsum}
\allowdisplaybreaks

\def\th@plain{%
  \thm@notefont{}
  \itshape 
}
\def\th@definition{%
  \thm@notefont{}
  \normalfont 
}
\makeatother

\usepackage{srcltx} 
\usepackage{amscd}
\usepackage{amssymb}
\usepackage{amsmath}
\usepackage{pb-diagram}
\usepackage{graphicx}
\usepackage{hyperref}
\usepackage{array}
\usepackage[all]{xy}
\xyoption{matrix}
\xyoption{arrow}
\usepackage{pdfsync}
\usepackage{physics}
\usepackage{enumitem}
 
\usepackage{thmtools}
\usepackage{amssymb}

\DeclareUnicodeCharacter{2212}{-,+}
\usepackage[none]{hyphenat}

\theoremstyle{definition}
\newtheorem{definition}{Definition}[section]
\newtheorem{theorem}[definition]{Theorem}

\newtheorem{lemma}[definition]{Lemma}

\newtheorem{remark}[definition]{Remark}

\newcommand{\Ic}{\mathcal{I}}
\newcommand{\Jc}{\mathcal{J}}
\newcommand{\Kc}{\mathcal{K}}
\newcommand{\Lc}{\mathcal{L}}
\newcommand{\Mc}{\mathcal{M}}

\newcommand{\Rc}{\mathcal{R}}

\newcommand{\Tc}{\mathcal{T}}

\newcommand{\vx}{\textbf{\textit{x}}}

\newcommand{\vw}{\textbf{\textit{w}}}
\newcommand{\vv}{\textbf{\textit{v}}}

\newcommand{\vf}{\textbf{\textit{f}}}

\newcommand{\vg}{\textbf{\textit{g}}}

\newcommand{\D}{\mathrm{d}}
\newcommand{\DR}{\mathbb{D}_R}
\newcommand{\vuu}{\textbf{\textit{u}}}

\title{\vspace{-1cm} V-Line Tensor Tomography in a Disk: Theoretical and Numerical Reconstruction}

\author{Rahul Bhardwaj\thanks{Department of Mathematics, Indian Institute of Technology, Ropar, Punjab - 140001, India. \url{bhardwaj161067@gmail.com}} \and Madhu Gupta\thanks{Department of Mathematics, Indian Institute of Technology, Gandhinagar, Gujarat - 382355, India. \url{madhu.gupta@iitgn.ac.in}}}
\date{}
\begin{document}
\maketitle
\begin{abstract}
In this article, we investigate V-line transforms for symmetric $m$-tensor fields whose support lies inside a disk of radius $R$ and centered at the origin. We provide an explicit characterization of the kernel of the V-line transforms acting on a symmetric $m$-tensor field and derive a new inversion formula using a decomposition result. In addition, we present a comprehensive numerical verification and validation of the inversion algorithms for these V-line transforms for vector fields and symmetric $2$-tensor fields, which were recently developed in \cite{bhardwaj_2024,bhardwaj2025tensor}. The reconstruction results obtained for various phantoms demonstrate the effectiveness and robustness of the proposed numerical methods, including in the presence of noise. 
\end{abstract}

\noindent \textbf{Keywords:} V-line Transforms, Tensor Tomography, Inversion Algorithms, Numerical Implementation
\vspace{0.5mm}\\
\noindent \textbf{Mathematics subject classification 2020:} 44A12, 44A30, 44A60, 47G10, 65R10, 65R32
\section{Introduction}
Optical tomography has emerged as an important and rapidly developing imaging modality in modern imaging science. It uses transmitted and scattered light to obtain information about an object's internal structure and physical properties. In most imaging applications, the objective is to reconstruct spatially varying absorption and scattering coefficients from measurements collected at the object's boundary. Such reconstruction problems naturally lead to inverse problems involving integral geometric transforms. Under suitable physical assumptions, it is commonly assumed that most photons undergo only a single scattering event while propagating through the object (see \cite{Florescu2009,Florescu2010,Florescu2011}). This single-scattering behavior naturally leads to the terminology \emph{broken-ray} or \emph{V-line transform}. 

Over the past several years, significant attention has been devoted to imaging models based on generalized Radon transforms, particularly the V-line transform.
These transforms generalize the classical Radon transform by integrating functions or tensor fields over broken-ray trajectories composed of two straight segments that intersect at a common vertex. Depending on the underlying physical model, the unknown quantity may be represented by a scalar function, vector field, symmetric $2$-tensor field, or, more generally, a higher-order symmetric tensor field. Consequently, the inversion and analysis of these V-line transforms are fundamental for the corresponding image reconstruction problems. V-line transforms naturally arise in several imaging modalities, including single-scattering optical tomography \cite{Florescu2010,Florescu2009}, single-scattering X-ray tomography \cite{Kats_Krylov-15, walker2021iterative}, fluorescence imaging \cite{florescu2018}, Compton camera imaging \cite{r8,Basko1997FullyTD}, Compton-scattering emission tomography with collimated detectors \cite{r9,r10}, and gamma-ray transmission/emission imaging \cite{Rigaud2013CombinedMO}.

Due to their broad range of applications, V-line transforms have been extensively investigated from both theoretical and computational points of view; see, for example, \cite{Ambartsoumian2019,amb-lat_2019,Amb_Lat_star,ambartsoumian2007thermoacoustic,Gouia_Amb_V-line,Ilmavirta,Ilmavirta2018BrokenRT,Ilmavirta_Mikko,Shubham,Kats_Krylov-13,Sherson,walker2019broken,Palamodov2017} and the references therein. Previous studies on V-line transforms have primarily focused on two distinct geometric configurations. The first class consists of V-lines whose vertices lie on or outside the boundary of the imaging domain; see, for instance, \cite{Terz-review-18, Indrani_2024}. The second class consists of V-line trajectories whose vertices are located within the support of the unknown function. Such transforms naturally arise in single-scattering tomography and Compton camera imaging. The present work concerns this second class of V-line transforms, and we briefly discuss relevant results in the next paragraph. 

In \cite{Gaik_Mohammad_Rohit,Gaik_Rohit_Indrani}, the authors investigated V-line transforms for vector fields and symmetric $2$-tensor fields in a setting where the vertex moves inside the disk while the V-line directions remain fixed. They derived exact kernel characterizations for these transforms and showed that vector fields and symmetric $2$-tensor fields can be uniquely reconstructed from appropriate combinations of V-line transform measurements and their corresponding moments. The numerical implementation, verification, and validation of these reconstruction algorithms were later developed in \cite{Gaik_Latifi_Rohit,Ambartsoumian2024VlineTT}.

A different geometry of V-lines whose vertex located inside the disk, described in Section~\ref{sec:Definition and notations}, was introduced by \cite{Ambarsoumian_2012}. In that work, the author derived theoretical and numerical reconstruction formulas for the V-line Radon transform of scalar functions under the support restriction that lies inside a disk. Subsequently, in \cite{Ambartsoumian_2013}, a complete reconstruction procedure was established based on a series-type inversion formula for the same transform. The numerical verification of these inversion methods was further studied in \cite{ambartsoumian2016numerical}.
For a detailed account of the mathematical foundations, historical development, and applications of the generalized Radon transforms, we refer the reader to the recent monograph~\cite{amb-book} by Gaik Ambartsoumian. The works \cite{bhardwaj_2024,bhardwaj2025tensor} extended the reconstruction results of \cite{Ambarsoumian_2012,Ambartsoumian_2013} from scalar functions to vector fields and higher-order symmetric tensor fields respectively. In these studies, inversion formulas were derived for the recovery of vector fields and symmetric tensor fields from appropriate collections of V-line transform data.

Motivated by all these developments, the present article makes several contributions. First, we establish an explicit characterization of the kernels of the mixed V-line transforms acting on symmetric $m$-tensor fields. Based on a suitable decomposition of tensor fields, we derive new inversion formulas for reconstructing special classes of symmetric $m$-tensor fields from the corresponding V-line transform data. These theoretical results extend the current understanding of V-line tensor tomography and provide new insights into the structure of these integral transforms. 

In addition, we provide a detailed numerical verification and validation of the inversion algorithms corresponding to the theoretical reconstruction formulas established in \cite{bhardwaj_2024,bhardwaj2025tensor} for vector field and symmetric $2$-tensor fields. The performance of the reconstruction procedures is examined using a variety of representative phantoms, and all numerical experiments are carried out in MATLAB. Furthermore, reconstructions are performed under different levels of additive noise to assess the practical performance of the proposed algorithms.

The remainder of this article is organized as follows. In Section~\ref{sec:Definition and notations}, we introduce the straight line and V-line transforms considered in this work together with the notation used throughout the article. Section~\ref{sec:main results} presents the main theoretical results of this article. Section~\ref{sec; kernel description} is devoted to a detailed proof of the main results. Section~\ref{sec: Numerical implementation} describes the phantoms used in the numerical experiments and explains the procedure for generating the forward data. This section is further divided into two subsections, Subsections~\ref{Sec: Reconstruction of vector field} and Subsections~\ref{Subsec: Rec Tensor}, which focus on the reconstruction of vector fields and symmetric $2$-tensor fields, respectively. Finally, we conclude the article with acknowledgements in Section~\ref {sec:acknowledgements}.
 \section{Definition, Notations and Preliminaries}\label{sec:Definition and notations}
This introductory section presents the notation and definitions used throughout this article. The regular fonts are used to denote scalars or scalar-valued functions (for example, $x_1$, $x_2$, $f$, and $g$, etc), whereas bold fonts are used to represent vectors, vector fields, or tensor fields in $\mathbb{R}^2$ (for example, $\vf$, $\vx$, $\vv$, etc).
\vspace{2mm}\\
\noindent The disk of radius $R$ centered at the origin is denoted by $\mathbb{D}_R$, and its boundary is represented by $\partial \mathbb{D}_R$. We denote $C_0^\infty(\DR, S^m(\mathbb{R}^2))$ to be the space of the symmetric $m$-tensor fields whose components belong to $C_0^\infty(\DR)$, the space of smooth functions compactly supported in $\DR$. Let $\omega^{1}, \omega^{2},\dots, \omega^{m}$ are vectors in $\mathbb{R}^2$, then the tensor product $\omega^{1}\otimes \omega^{2}\otimes \dots\otimes  \omega^{m}$ is defined component-wise by
 \begin{align}
    \left( \omega^{1}\otimes \omega^{2}\otimes \dots\otimes  \omega^{m}\right)_{i_1i_2\cdots i_m} = \omega^{1}_{i_1}\omega^{2}_{i_2}\dots \omega^{m}_{i_m}, \qquad i_1,i_2,\cdots,i_m \in \{1, 2\}.
 \end{align}
 The corresponding symmetric tensor product is defined by 
 \begin{align}
     \omega^{1}\omega^{2}\dots \omega^{m} = \frac{1}{m!}\sum\limits_{\sigma\in \Pi_{m}} \omega^{\sigma(1)}\otimes \omega^{\sigma(2)}\otimes \dots\otimes  \omega^{\sigma(m)},
 \end{align}
 where $\Pi_m$ denotes the symmetric group consisting of all permutations of the set $\{1,2,\ldots,m\}$, for more deetails see \cite[Section 2.1 ]{Sharafutdinov_Book}. The standard inner product on $S^{m}(\mathbb{R}^{2})$ is defined by  \begin{align*}
    \left\langle \vf,\vg \right\rangle = f_{i_1i_2\cdots i_m}g^{i_1i_2\cdots i_m} 
\end{align*}
and the corresponding norm is denoted by $|\cdot|$. Throughout this article, we use the \textquotedblleft{\textbf{Einstein summation convention}}'' according to which summation
over repeated indices is understood automatically from $1$ to $2$. F
or $\vuu=(u^1,u^2)\in\mathbb{R}^2$, the notation $\vuu^m$ represents the symmetric tensor obtained by taking the $m$-fold symmetric product of $\vuu$ with itself. Its components are given by
\begin{align*}
    (\vuu^m)_{i_1\cdots i_m}
=
u^{i_1}\cdots u^{i_m},
\qquad
i_1,\ldots,i_m\in\{1,2\}.
\end{align*}
Thus, we have $\vuu^m \in S^{m}(\mathbb{R}^2)$.
\vspace{2mm}\\
\noindent Next, we introduce the differential operators used later in this article. Compositions of the classical differential operators, namely the gradient and orthogonal gradient operators, naturally give rise to the inner differentiation and inner orthogonal differentiation operators acting on symmetric $m$-tensor fields $\vf$. We also define the divergence and orthogonal divergence operators for symmetric $m$-tensor fields. These operators are given as follows:
\begin{itemize}
\item The inner differentiation and inner orthogonal differentiation operators $\mathrm{d},\D^\perp :C_0^\infty(\DR,S^m(\mathbb{R}^2)) \rightarrow  C_0^\infty\left(\mathbb{D}_R, S^{m+1}(\mathbb{R}^2)\right)$ are defined as follows:
\begin{align*}
    ( \D \vf )_{i_1\dots i_mj} &:= \frac{1}{m+1}\left(\frac{\partial{f_{i_1\dots i_m}}}{\partial x_j}+\sum\limits_{k=1}^m\frac{\partial{f_{i_1\dots i_{k-1}ji_{k+1}\dots i_m}}}{\partial x_{i_k}}\right) \quad \mbox{ and } \\ (\D^\perp\vf)_{i_1\dots i_mj} &:= \frac{1}{m+1}\left((-1)^j\frac{\partial{f_{i_1\dots i_m}}}{\partial x_{3-j}}+\sum\limits_{k=1}^m(-1)^{i_k}\frac{\partial{f_{i_1\dots i_{k-1}ji_{k+1}\dots i_m}}}{\partial x_{3-i_k}}\right).
\end{align*}
\item The divergence and the orthogonal divergence operators $\delta, \delta^\perp:C_0^\infty\left(\DR,S^{m}(\mathbb{R}^2)\right) \rightarrow  C_0^\infty\left(\DR,S^{m-1}(\mathbb{R}^2)\right)$ are defined as follows:
\begin{align*}
    (\delta \vf)_{i_1\dots i_{m-1}} &:= \frac{\partial{f_{i_1\dots i_{m-1}1}}}{\partial x_1}+\frac{\partial{f_{i_1\dots i_{m-1}2}}}{\partial x_2} \quad \mbox{ and }  \\(\delta^\perp \vf)_{i_1\dots i_{m-1}} &:= \frac{\partial{f_{i_1\dots i_{m-1}2}}}{\partial x_1} - \frac{\partial{f_{i_1\dots i_{m-1}1}}}{\partial x_2}.
\end{align*}
\end{itemize}

\noindent We now recall the definitions of the V-line (or broken ray) and the corresponding V-line transforms for symmetric $m$-tensor fields, which were previously studied in \cite{Ambarsoumian_2012,bhardwaj_2024,bhardwaj2025tensor}. For the definitions and background on straight-line transforms, we refer the reader to \cite{Sharafutdinov_Book,derevtsov2015tomography}.
\begin{definition}[Broken-ray]
   Let $\DR$ be the disk of radius $R$ centered at the origin, and let $\theta \in (0, \pi/2)$ be a fixed scattering angle. The \textbf{V-line} or \textbf{broken ray}, denoted by $BR(\beta, d)$, consists of two connected straight-line segments which starts at the boundary point $\vx_\beta$ lies on $\partial \DR$, and travels a distance $d$ in the radial direction $\vuu_\beta$, and then breaks into another ray through the angle $\pi - \theta$, after which it continues along the
direction $\vv_\beta$ (see Figure \ref{fig: def of broken ray}). 
\end{definition}
\noindent More precisely, $BR(\beta, d)$ is defined by:
\begin{align}\label{eq:definition of BR(beta,d)}
    BR(\beta,d) =  \left\{\vx_\beta + t \vuu_\beta: 0\leq t \leq d\right\}  \cup \left\{\vx_\beta + d \vuu_\beta + s \vv_\beta: 0 \leq s < \infty\right\}.
\end{align} 

\begin{figure}[H]
\centering
\begin{subfigure}[b]{0.45\textwidth}
\centering
\includegraphics[width=\textwidth]{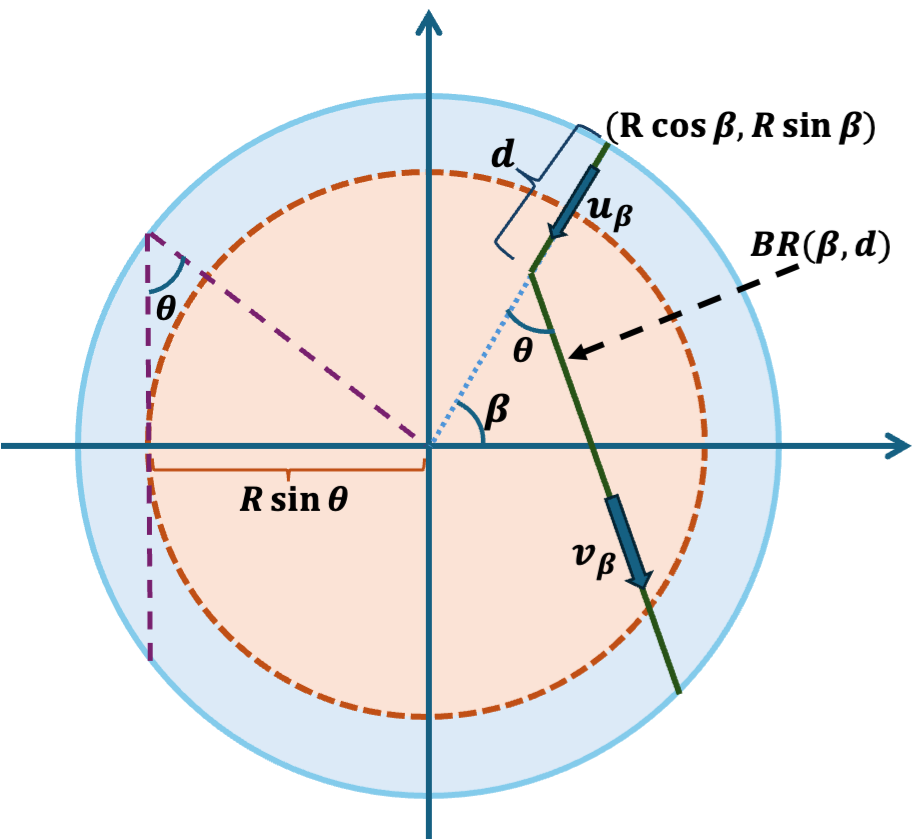}\caption{}\label{fig: def of broken ray}
\end{subfigure}
\hfill
\begin{subfigure}[b]{0.45\textwidth}
\centering
\includegraphics[width=\textwidth]{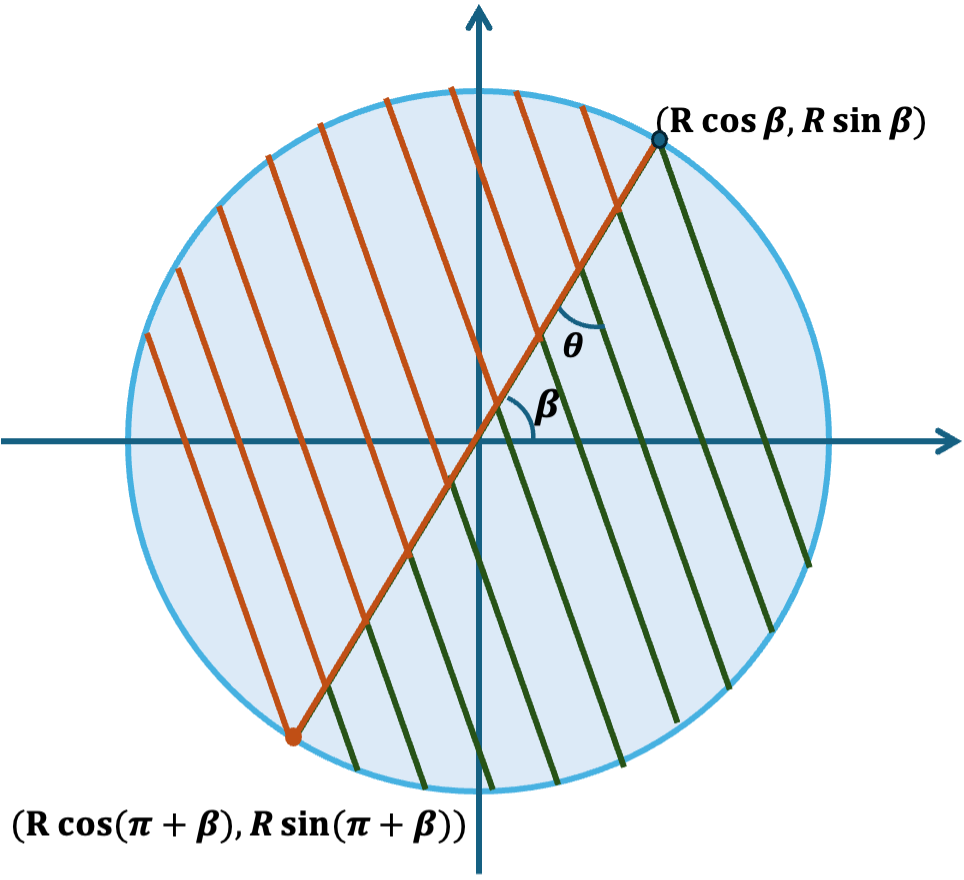}
\caption{}\label{fig: data is taken over}
\end{subfigure}
\caption{(a) Construction of the V-line $BR(\beta,d)$ \& (b) Configuration of broken rays used for data collection.}\label{fig: Description of V-lines}
\end{figure}
\noindent Here $\vx_\beta = (R \cos \beta, R \sin \beta)$ denotes the point on the boundary $\partial\DR$ from which the broken ray originates. The vectors
  $\vuu_\beta =  -(\cos \beta, \sin \beta)$ and $\vv_\beta = -\left(\cos (\theta + \beta), \sin (\theta +\beta)\right)$ are unit vectors representing the directions of the first and second segments of the V-line, respectively. Their corresponding orthogonal unit vectors are given by
  $\vuu_{\beta}^\perp = (\sin \beta, - \cos \beta)$ and $\vv_\beta^{\perp} = \left(\sin (\theta +\beta), -\cos (\theta + \beta)\right)$ respectively. For each pair $(\beta,d)$, the unit vectors $\vuu_\beta$ and $\vv_\beta$ are uniquely determined and will be used throughout the subsequent definitions of the V-line transforms acting on symmetric $m$-tensor fields.
\begin{definition}\label{def:mixed v-line transform}
For $\vf \in C_0^{\infty}(\DR; S^{m}(\mathbb{R}^{2}))$ and $0 \leq k \leq m$, the \textbf{kth mixed V-line transform} of $\vf$ is given as follows
\begin{equation}\label{eq:def en as followsMVT}
\mathcal{M}^{(k)}\vf (\beta,d) := \int_{0}^{d} \left\langle\vf (\vx_\beta +s\vuu_\beta),(\vuu_\beta^\perp)^{k}\vuu_\beta^{m-k} \right\rangle\,ds + \int_{0}^{\infty}  \left\langle \vf(\vx_\beta + d\vuu_\beta + s\vv_\beta), (\vv_\beta^\perp)^{k}\vv_\beta^{m-k}\right\rangle\,ds, 
\end{equation}
where $(\vuu_\beta^\perp)^{k}\vuu_\beta^{m-k} := (\vuu_\beta^{\perp})^{i_1}(\vuu_\beta^{\perp})^{i_2}\cdots (\vuu_\beta^{\perp})^{i_k}(\vuu_\beta^{\perp})^{i_{k+1}}\cdots \vuu_\beta^{i_m} $, for $i_1,i_2,\cdots ,i_m = 1, 2$, $\beta \in [0, 2\pi)$ and $ d \in [0,2R]$. 
\end{definition}
\begin{remark}
Two extreme values of $k$ correspond to special transforms that we to mention here; we will use them in the numerical validation sections. 
\begin{itemize}
    \item For $k=0$, the above transform $\mathcal{M}^{(0)}\vf $ is known as the longitudinal V-line transform, and it is denoted by $\Lc\vf$.
    \item For $k=m$, the above transform $\mathcal{M}^{(m)}\vf $ is known as the transverse V-line transform, and it is denoted by $\Tc\vf$.
\end{itemize}
\end{remark}
 \noindent  Subsequently, we also introduce the straight-line ray transforms for the symmetric $m$-tensor field, which will be used multiple times later in this article.
These transforms are defined by integrating tensor fields along straight lines in $\mathbb{R}^{2}$. For $\psi\in[0,2\pi)$ and $p\in\mathbb{R}$, let
\begin{align*}
    L(\psi,p):=\{(y_1,y_2)\in\mathbb{R}^{2}:y_1\cos\psi+y_2\sin\psi=p\}
\end{align*}
denote the straight line whose signed distance from the origin is $p$ and whose normal vector is
$\vw=(\cos\psi,\sin\psi)$.
The corresponding unit tangent vector is
$\vw^\perp=(-\sin\psi,\cos\psi),$
which is orthogonal to \(\vw\). The line $L(\psi,p)$ can therefore be parameterized as
$\{\,p\vw+s\vw^\perp:\; s\in\mathbb{R}\,\}$.
\begin{definition}\label{def:mixed ray transform}
For $\vf \in C_0^{\infty}(\DR; S^{m}(\mathbb{R}^{2}))$ and $0 \leq k \leq m$, the \textbf{kth mixed ray transform} of $\vf$ is given as follows
\begin{equation}\label{eq:;def MRT}
\mathcal{K}^{(k)}\vf (\psi,p) = \mathcal{K}^{(k)}\vf (\vw,p) := \int_\mathbb{R} \left\langle \vf (p \vw + s\vw^\perp), \vw^{k}(\vw^\perp)^{m-k}\right\rangle\,ds, 
\end{equation}
where $\vw^{k}(\vw^\perp)^{m-k} := w^{i_1}w^{i_2}\cdots w^{i_k}(w^{\perp})^{i_{k+1}}\cdots (w^{\perp})^{i_{m}} $, for $i_1,i_2,\cdots ,i_m = 1, 2$, $\psi \in [0, 2\pi)$ and $p \in \mathbb{R}$.
\end{definition}
\begin{remark}
As in the $V$-line setup, the special cases $k=0$ and $k=m$ correspond to the longitudinal (denoted by $\Ic\vf$) and transverse (denoted by $\Jc\vf$)  ray transforms, respectively. 
\end{remark}
\noindent Finally, we state the Radon transform and its inversion.
\begin{definition}\label{def:the Radon transform}
Let $f$ be a scalar function field in  $C_0^\infty(\DR)$. The \textbf{Radon transform} of $f$ is given as follows
\begin{equation}\label{eq:def Radon transform}
\Rc f (\psi,p) = \Rc f (\vw,p) := \int_\mathbb{R} f (p \vw + s\vw^\perp)\,ds, \quad \psi \in [0, 2\pi) \mbox { and } p \in \mathbb{R}.
\end{equation}
\end{definition}
\noindent The injectivity of the Radon transform implies that $f$ can be reconstructed uniquely from its Radon data. An explicit inversion formula is given by (see \cite{Helgason_Book}):
\begin{align}\label{eq:Radon's inversion formula}
    f(\vx) = \frac{1}{2\pi} \left(-\Delta\right)^{1/2} \int_0^{2\pi}\Rc f((\cos \alpha, \sin \alpha), x \cos \alpha + y \sin \alpha) d\alpha. 
\end{align}
Next, we present the decomposition results for symmetric $m$-tensor fields that will be needed later.
\begin{theorem}[Tensor Field Decomposition {\cite[Theorem~5.1]{derevtsov2015tomography}}]
Let $\vf \in C_0^{\infty}(\DR; S^{m}(\mathbb{R}^{2}))$ be a symmetric $m$-tensor field. Then there exist scalar potentials $\Psi^{(0)}, \Psi^{(1)}, \dots, \Psi^{(m)} $ $ \in C^{\infty}(\DR)$ satisfy the following  boundary conditions $\D^{\,k}\Psi^{(j)}\big|_{\partial\DR}=0,\
k=0,\dots,m-1$, such that 
\begin{equation}\label{eq:decomposition}
\vf =  (\D^{\perp})^m \Psi^{(0)} + \sum_{j=1}^m (\D^{\perp})^{m-j} \D^j \Psi^{(j)}.
\end{equation}
\end{theorem}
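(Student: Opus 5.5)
Decompose via the action on Fourier/Radon side: the plan is to determine the potentials $\Psi^{(0)},\dots,\Psi^{(m)}$ from $\vf$ by solving a triangular system obtained from the mixed ray transforms of Definition~\ref{def:mixed ray transform}. The key identities are: for a scalar $\Psi\in C^\infty(\DR)$ with the stated boundary vanishing, the longitudinal ray transform kills $\D$-potentials ($\langle \D\vg(p\vw+s\vw^\perp),(\vw^\perp)^{m}\rangle=\frac{d}{ds}\langle \vg,(\vw^\perp)^{m-1}\rangle$), and analogously $\D^\perp$ pairs with $\vw$ to a derivative along the line. More precisely, contracting $(\D^\perp)^{m-j}\D^j\Psi$ with $\vw^k(\vw^\perp)^{m-k}$ and integrating along $L(\psi,p)$ produces, after repeated integration by parts in $s$ and the identities $\langle\D^\perp \vg,\vw\,\cdot\rangle \sim \partial_{\vw^\perp}$, $\langle \D\vg,\vw\,\cdot\rangle\sim\partial_{\vw}$, a combination of $\partial_p^{m}\Rc\Psi(\psi,p)$ with explicit constants depending on $(j,k)$, plus terms vanishing because they are exact $s$-derivatives of compactly supported (or boundary-vanishing) quantities.

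The steps in order: (1) Compute the Fourier transform of each piece: $\widehat{(\D^\perp)^{m-j}\D^j\Psi}(\xi)=i^m(\xi^\perp)^{m-j}\xi^{j}\hat\Psi(\xi)$ symmetrized, up to normalization. (2) Observe that the $m+1$ symmetric tensors $\{(\xi^\perp)^{m-j}\xi^j\}_{j=0}^m$ form a basis of $S^m(\mathbb{R}^2)$ for every $\xi\neq0$, since in polar coordinates they correspond to $\cos^{j}\sin^{m-j}$-type monomials which are linearly independent (equivalently, the binary forms $(\xi^\perp\cdot\eta)^{m-j}(\xi\cdot\eta)^j$ span homogeneous degree-$m$ polynomials in $\eta$). (3) Hence for each $\xi\neq0$, $\hat\vf(\xi)=\sum_j c_j(\xi)(\xi^\perp)^{m-j}\xi^j$ uniquely, with $c_j$ homogeneous of degree $0$ in direction times $\hat\vf$, and setting $\hat\Psi^{(j)}(\xi)=c_j(\xi)/(i^m|\xi|^m)$ gives potentials; equivalently $\Psi^{(j)}$ is recovered from $\Rc\Psi^{(j)}$ by integrating $m$ times in $p$ the appropriate combination of $\Kc^{(k)}\vf$, then applying \eqref{eq:Radon's inversion formula}. (4) Define $\Psi^{(j)}$ in $\DR$ as the solution of the elliptic problem $\Delta^{m}$-type system: concretely, apply $\delta^{j}(\delta^\perp)^{m-j}$-type operators to $\vf$ to get $\Delta^m\Psi^{(j)}=$ (known scalar), and solve with Dirichlet data $\D^k\Psi^{(j)}|_{\partial\DR}=0$, $k<m$, a well-posed polyharmonic Dirichlet problem giving smooth $\Psi^{(j)}$ up to the boundary.

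The main obstacle is reconciling (3) and (4): the whole-space Fourier construction yields potentials that need not satisfy the boundary conditions, while the Dirichlet-problem construction requires verifying that $\vf-\sum_j(\D^\perp)^{m-j}\D^j\Psi^{(j)}$ vanishes. I would handle this by showing the remainder $\vh$ satisfies $\delta^{a}(\delta^\perp)^{m-a}\vh=0$ for all $a$ and has vanishing traces, then use the injectivity/uniqueness of the coupled polyharmonic system (via Green's identity, pairing $\vh$ with itself and integrating by parts, which is where the boundary conditions on $\D^k\Psi^{(j)}$ are used to kill boundary terms) to conclude $\vh=0$. Decoupling the system requires the commutation $\delta\D^\perp$ vs. $\delta^\perp\D$ relations and the identity $\delta\D+\delta^\perp\D^\perp\propto\Delta$ on symmetric tensors, which I would verify first at the level of symbols.
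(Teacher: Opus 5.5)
The paper gives no proof of this statement (it is quoted from Derevtsov--Svetov), so your proposal has to stand on its own, and it fails at the last step. Steps 1--2 and the decoupling in Step 4 are fine. The operator $\delta^a(\delta^\perp)^{m-a}(\D^\perp)^{m-j}\D^j$ vanishes for $a\neq j$ and is a nonzero multiple of $\Delta^m$ for $a=j$. So each $\Psi^{(j)}$ is \emph{uniquely} determined by $\Delta^m\Psi^{(j)}=c_j\,\delta^j(\delta^\perp)^{m-j}\vf$ together with the $m$ Dirichlet conditions. That leaves no freedom at all. Your claim that the remainder $\textbf{\textit{h}}=\vf-\sum_j(\D^\perp)^{m-j}\D^j\Psi^{(j)}$ has vanishing traces is unfounded. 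It is true that $\vf=0$ near $\partial\DR$, but the sum consists of $m$-th order derivatives of the $\Psi^{(j)}$, whereas the conditions $\D^\ell\Psi^{(j)}|_{\partial\DR}=0$ only control orders $\ell\le m-1$. Without zero traces, the Green-identity argument has nothing to close on. The field $\textbf{\textit{h}}$ lies in the joint kernel of all $\delta^a(\delta^\perp)^{m-a}$, which is infinite-dimensional (for $m=1$ it consists of the harmonic fields $\nabla u$ with $\Delta u=0$), and in general $\textbf{\textit{h}}\neq0$.

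In fact the gap cannot be repaired, because the statement as written here is false for every $m\ge1$. Each component of $(\D^\perp)^{m-j}\D^j\Psi^{(j)}$ is a linear combination of terms $\partial_i\partial^\alpha\Psi^{(j)}$ with $|\alpha|=m-1$, and $\partial^\alpha\Psi^{(j)}=0$ on $\partial\DR$. By the divergence theorem, $\int_{\DR}\partial_i\partial^\alpha\Psi^{(j)}\,dx=\int_{\partial\DR}\partial^\alpha\Psi^{(j)}\nu_i\,dS=0$. (If only continuity of these derivatives up to $\partial\DR$ is assumed, integrate over $\mathbb{D}_r$ and let $r\uparrow R$, using that $\vf$ vanishes near $\partial\DR$.) Hence $\int_{\DR}\vf\,dx=0$ is necessary. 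Take the field with $f_{1\cdots1}=g\in C_0^\infty(\DR)$, $\int g\neq0$, and all other components zero: it has no such decomposition. For $m=1$ this is the familiar fact that $\nabla H^1_0\oplus\nabla^\perp H^1_0$ misses the harmonic fields. Iterating the integration by parts, all moments of $\vf$ of order $<m$ must vanish. The same obstruction is visible in your Step 3: for $p>R$, the function $\frac{1}{(m-1)!}\int_{-\infty}^{p}(p-\tau)^{m-1}\Kc^{(k)}\vf(\psi,\tau)\,d\tau$ is a polynomial in $p$ whose coefficients are moments of $\Kc^{(k)}\vf$. Hence it is not the Radon transform of a function supported in $\DR$ unless those moments vanish. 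What your construction does prove is uniqueness. It also shows that the decomposition exists exactly when $\textbf{\textit{h}}=0$, i.e.\ when $\vf$ is $L^2$-orthogonal to every $\vg$ with $\delta^a(\delta^\perp)^{m-a}\vg=0$ for all $a$. A correct theorem has to include such a remainder term, weaken the boundary conditions on at least one potential, or restrict the class of $\vf$.
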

\begin{lemma}[{\cite[Lemma~5.1]{derevtsov2015tomography}}]\label{Lemma: commute}
Let $\vf \in C_0^{\infty}(\DR; S^{m}(\mathbb{R}^{2}))$ be a symmetric $m$-tensor field. Then the operators $\D$ and $\D^\perp$ commute, that is,
\begin{align*}
    \D^\perp(\D\vf)=\D(\D^\perp\vf).
\end{align*}
\end{lemma}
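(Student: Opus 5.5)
The plan is a direct computation in components, using the fact that $\D$ and $\D^\perp$ are symmetrizations of constant-coefficient first-order operators. Write $\partial_j = \partial/\partial x_j$ and define the rotated derivative $\partial^\perp_j := (-1)^j \partial_{3-j}$. Then $\partial^\perp_1 = -\partial_2$ and $\partial^\perp_2 = \partial_1$, which is consistent with the definition of $\D^\perp$ given above.

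In this notation, $\D\vf = \sigma(\nabla \otimes \vf)$ and $\D^\perp\vf = \sigma(\nabla^\perp \otimes \vf)$, where $\sigma$ denotes symmetrization and $\nabla^\perp = (\partial^\perp_1, \partial^\perp_2)$. Indeed, the formula for $(\D\vf)_{i_1\dots i_m j}$ is exactly the average over the $m+1$ positions of the index carrying the derivative, and the same holds with $\partial^\perp$ in place of $\partial$. Since $\vf$ is already symmetric, this average coincides with the full symmetrization $\sigma$ over $\Pi_{m+1}$.

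The main step is to show that
\begin{align*}
\D^\perp(\D\vf) = \sigma\bigl(\nabla^\perp\otimes \sigma(\nabla\otimes\vf)\bigr) = \sigma\bigl(\nabla^\perp\otimes\nabla\otimes\vf\bigr).
\end{align*}
This uses the standard identity $\sigma(\mathbf{a}\otimes\sigma(\mathbf{b})) = \sigma(\mathbf{a}\otimes\mathbf{b})$, which follows because symmetrizing over $\Pi_{m+2}$ absorbs the inner symmetrization over the subgroup $\Pi_{m+1}$ acting on the last $m+1$ slots. By the same argument,
\begin{align*}
\D(\D^\perp\vf) = \sigma(\nabla\otimes\nabla^\perp\otimes\vf).
\end{align*}

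Both expressions have components of the form $\partial^\perp_{a}\partial_{b} f_{\dots}$ and $\partial_{a}\partial^\perp_{b} f_{\dots}$, symmetrized over all index positions. Because the coefficients are constant, the partial derivatives commute: $\partial^\perp_a\partial_b = \partial_b\partial^\perp_a$. Thus the tensor $\nabla^\perp\otimes\nabla\otimes\vf$ equals $\nabla\otimes\nabla^\perp\otimes\vf$ after swapping the first two indices. A transposition of slots is invisible after full symmetrization, so the two results agree.

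I expect the main obstacle to be bookkeeping rather than any conceptual difficulty. Two points need checking:
\begin{itemize}
\item The normalization factors $1/(m+1)$ and $1/(m+2)$ must combine correctly to give the full symmetrization, which is the subgroup-absorption claim above.
\item The sign factors $(-1)^{i_k}$ in $\D^\perp$ must be attached to the index that carries the derivative, so that they really define the constant vector operator $\nabla^\perp$ rather than something depending on the other indices.
\end{itemize}
If the abstract argument feels too slick, I would verify it directly for $m=0$. There $\D\D^\perp\Psi$ and $\D^\perp\D\Psi$ both have components
\begin{align*}
\tfrac12(\partial_i\partial^\perp_j+\partial_j\partial^\perp_i)\Psi,
\end{align*}
by the equality of mixed partials. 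The general case is then the same computation carried out with the extra symmetric indices along for the ride.
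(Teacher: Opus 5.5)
Your proof is correct. Writing $\D\vf=\sigma(\nabla\otimes\vf)$ and $\D^\perp\vf=\sigma(\nabla^\perp\otimes\vf)$ with $\nabla^\perp=(-\partial_2,\partial_1)$ matches the paper's component formulas exactly, including the sign $(-1)^{i_k}$, which does travel with the differentiated index. The absorption identity $\sigma\circ(\mathrm{id}\otimes\sigma)=\sigma$ makes the $1/(m+1)$ and $1/(m+2)$ normalizations automatic. Equality of mixed partials shows that $\nabla\otimes\nabla^\perp\otimes\vf$ is a transposition of the first two slots of $\nabla^\perp\otimes\nabla\otimes\vf$, and full symmetrization cannot detect that.

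The paper gives no proof of its own. It simply quotes Lemma~5.1 of \cite{derevtsov2015tomography}, so there is no in-paper argument to compare against.

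One point in your favour: your argument is purely local and needs only $C^2$ regularity, not compact support. So it also justifies how the lemma is actually used in the proof of Theorem~\ref{Thm: kernel}. There it is applied to potentials $\Psi^{(j)}\in C^\infty(\DR)$ that need not be compactly supported, and, after an obvious induction, to reorder factors in $(\D^\perp)^{m-j}\D^{j}$.
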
 
\section{Theoretical Results}\label{sec:main results}
This section is devoted to the statement of the main results of the article. We start by presenting the kernel characterisation of the mixed V-line transforms for symmetric $m$-tensor fields. We show that each of these transforms possesses a non-trivial kernel. This behavior
is also analogous to the case of the straight-line integral transforms of symmetric $m$-tensor fields, where the corresponding kernel characterizations are
known to be exact. The next theorem describes the kernel structure of the V-line transforms associated with symmetric $m$-tensor fields.
\begin{theorem}\label{Thm: kernel}
Let $\vf \in C_{0}^{\infty}(\mathbb{D}_{R\sin{\theta}},S^m(\mathbb{R}^2))$. Then the symmetric $m$-tensor field $\vf$ belongs to the kernel of the kth-mixed V-line transform
 $ \Mc^{(k)}\vf(\beta,d),\ 0\leq k\leq m,$
if and only if there exist scalar potentials
$\Psi^{(j)} \in C^{\infty}(\DR),$ $0\leq j\leq m$ $(j\neq k)$,
satisfying the boundary conditions
$\D^{\,\ell}\Psi^{(j)}\big|_{\partial\DR}=0,
\
\ell=0,\dots,m-1,$
such that
\begin{align}\label{eq;decomposition}
\vf = \sum_{\substack{j=0 \\ j\neq k}}^{m}
(\D^{\perp})^{\,m-j}\D^{\,j}\Psi^{(j)}.
\end{align}
\end{theorem}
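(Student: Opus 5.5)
The plan is to reduce the V-line statement to the known kernel description of the straight-line mixed ray transforms $\Kc^{(k)}$, and then use the decomposition \eqref{eq:decomposition} together with Lemma~\ref{Lemma: commute}.

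\textbf{Step 1: recover the mixed ray transform from the V-line data.} The support assumption $\operatorname{supp}\vf\subset\mathbb{D}_{R\sin\theta}$ is used here. Differentiate $\Mc^{(k)}\vf(\beta,d)$ in $d$. The first segment contributes the value $\langle \vf(\vx_\beta+d\vuu_\beta),(\vuu_\beta^\perp)^k\vuu_\beta^{m-k}\rangle$. The second segment contributes the derivative of a ray integral along direction $\vv_\beta$ with base point moving in direction $\vuu_\beta$. Writing $\vuu_\beta=\cos\theta\,\vv_\beta+\sin\theta\,\vv_\beta^\perp$ (up to sign conventions), the $\vv_\beta$-component of this motion integrates to a boundary term. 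What remains is $\sin\theta$ times the normal derivative $\partial_p$ of a ray integral of $\vf$ along the line through $\vx_\beta+d\vuu_\beta$ with direction $\vv_\beta$.

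Every line meeting $\mathbb{D}_{R\sin\theta}$ arises as the second branch of some $BR(\beta,d)$ with $d\in[0,2R]$, by elementary geometry: the distance from the origin to the second ray is at most $R\sin\theta$. Using this, I would follow the scalar argument of \cite{Ambarsoumian_2012} and the tensor arguments of \cite{bhardwaj_2024,bhardwaj2025tensor}. These show that $\Mc^{(k)}\vf\equiv 0$ forces the full mixed ray transform $\Kc^{(k')}\vf\equiv0$ for the matching index, since $(\vv^\perp)^k\vv^{m-k}$ corresponds to $\vw^k(\vw^\perp)^{m-k}$ with $\vw=\pm\vv^\perp$. Concretely, both terms vanish for $d$ near $2R$ because the support stays away from the boundary, and one integrates a first-order ODE in $d$ inward.

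The converse direction does not need this reduction. For a potential term one computes directly, by the fundamental theorem of calculus along each segment, that $\langle (\D^\perp)^{m-j}\D^j\Psi,(\vuu^\perp)^k\vuu^{m-k}\rangle$ contains a full derivative along $\vuu$ whenever $j\neq k$. The same holds along $\vv$. The resulting boundary terms at the vertex and at infinity involve $\D^{\ell}\Psi$ with $\ell\le m-1$. These cancel at the vertex between the two segments, or vanish because $\Psi$ and its derivatives vanish on $\partial\DR$ and outside it.

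\textbf{Step 2: forward direction via decomposition.} Decompose $\vf$ by \eqref{eq:decomposition}, using Lemma~\ref{Lemma: commute} to order the factors. Each term with $j\ne k$ lies in the kernel by the computation above. Hence $\Mc^{(k)}\vf=\Mc^{(k)}\big((\D^\perp)^{m-k}\D^k\Psi^{(k)}\big)$. Pairing $(\D^\perp)^{m-k}\D^k\Psi$ with $\vw^k(\vw^\perp)^{m-k}$ gives, up to a nonzero constant, $\partial_{\vw}^{\,k}\partial_{\vw^\perp}^{\,m-k}$-type derivatives of $\Psi$ in which no derivative is along the line direction. By Step 1, $\Kc\big((\D^\perp)^{m-k}\D^k\Psi^{(k)}\big)=c\,\partial_p^{m}\Rc\Psi^{(k)}=0$ with $c\neq0$. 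Since $\Psi^{(k)}$ is compactly supported by the boundary conditions, $\partial_p^m\Rc\Psi^{(k)}=0$ gives $\Rc\Psi^{(k)}\equiv0$, so $\Psi^{(k)}=0$ by \eqref{eq:Radon's inversion formula}.

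\textbf{Main obstacle.} The hard part is Step 1: carefully tracking the boundary terms from differentiating in $d$, and checking that the support condition $\mathbb{D}_{R\sin\theta}$ really yields every line needed for Radon injectivity. I would first try to obtain the identity $\partial_d\Mc^{(k)}\vf$ expressed through $\partial_p\Kc\vf$ plus local terms. An alternative is to work with the potential $\Psi^{(k)}$ directly, so that the V-line data become $\partial_d$ of an explicit combination of $\Rc\Psi^{(k)}$, and then invert that as in the scalar case.
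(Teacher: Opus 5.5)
Your converse step fails: for $m\ge 2$ the vertex terms do \emph{not} cancel. After integrating along each branch, the contribution at the vertex $V=\vx_\beta+d\vuu_\beta$ is an $(m-1)$-st order derivative of $\Psi^{(j)}$ contracted against a tensor built from $\vuu_\beta,\vuu_\beta^\perp$ on the first branch, and against one built from $\vv_\beta,\vv_\beta^\perp$ on the second. These coincide only when $m=1$, where both are $\Psi^{(j)}(V)$.

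For a concrete counterexample, take $m=2$, $k=0$, $\Psi^{(1)}=0$ and $\Psi^{(2)}=\Psi\in C_0^\infty(\mathbb{D}_{R\sin\theta})$. All boundary conditions then hold trivially, and the field $\vf=\D^2\Psi$ satisfies
\[
\Lc\vf(\beta,d)=\int_0^d\partial_s^2\Psi(\vx_\beta+s\vuu_\beta)\,ds+\int_0^\infty\partial_s^2\Psi(V+s\vv_\beta)\,ds=(\vuu_\beta-\vv_\beta)\cdot\nabla\Psi(V).
\]
For a radial $\Psi(\vx)=\chi(|\vx|^2)$ with $\chi$ a nonconstant bump, this equals $-2(1-\cos\theta)(R-d)\,\chi'\big((R-d)^2\big)$, which is not identically zero.

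So the ``if'' direction of the statement is false for $m\ge 2$, and no argument can close it. The paper's own proof of this direction has the same defect: it claims the boundary conditions kill the terms evaluated at $\vx_\beta+d\vuu_\beta$, but that point is interior to $\DR$. Only for $m=1$ does this converse (yours or the paper's) go through.

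Your Step 2 inherits the problem in two ways. First, it uses exactly this V-line kernel property for the $j\neq k$ terms. Second, it applies Step 1 to $(\D^\perp)^{m-k}\D^k\Psi^{(k)}$. That field is in general not supported in $\mathbb{D}_{R\sin\theta}$, because the potentials of the decomposition are nonlocal. So Step 1 would give its straight-line data only for $|p|\le R\sin\theta$, which is not enough.

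The ``only if'' direction is rescued by reversing the order:
\begin{enumerate}
\item Apply Step 1 to $\vf$ itself, which gives $\Kc^{(k)}\vf\equiv 0$ on every line.
\item Only then decompose. On full straight lines the $j\neq k$ terms do integrate to zero: there is no vertex, and the integrand is a derivative along the line of $(m-1)$-st order derivatives of $\Psi^{(j)}$, which vanish on $\partial\DR$.
\item Hence $0=\Kc^{(k)}\vf=\binom{m}{k}^{-1}\partial_p^m\Rc\Psi^{(k)}$, and so $\Psi^{(k)}=0$.
\end{enumerate}
This reproves from the decomposition theorem the classical straight-line kernel characterization that the paper simply cites. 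That is a legitimate, more self-contained route.

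Finally, Step 1 needs no differentiation in $d$ and no ODE. The broken rays $BR(\beta,d)$ and $BR(\beta+\pi,2R-d)$ share the vertex. Their second branches make up the line $L(\psi_\beta,t_d)$, and their first branches make up the diameter, whose contribution is $\Mc^{(k)}\vf(\beta,2R)$. This gives directly the identity relating $\Kc^{(k)}\vf$ to $\Mc^{(k)}\vf$ that the paper uses.
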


\noindent Next, we recall a reconstruction result for symmetric $m$-tensor fields from V-line transform data, previously obtained in \cite{bhardwaj_2024,bhardwaj2025tensor}. The numerical implementation, verification, and validation of the associated inversion algorithms for vector fields and symmetric $2$-tensor fields are presented in Section~\ref{sec: Numerical implementation}.

\begin{theorem}{\cite[Theorem~2.9]{bhardwaj2025tensor}}\label{th:full data recovery}
Let $\vf \in C_0^{\infty}(\mathbb{D}_{R\sin{\theta}}; S^{m}(\mathbb{R}^{2}))$. Then $\vf$ is uniquely recovered from the information of its mixed V-line transforms data, $\Mc^{(k)} \vf (\beta, d)$ ($0\leq k \leq m$), where $\beta \in [0, 2\pi)$ and $d \in [0, 2R]$. 
\end{theorem}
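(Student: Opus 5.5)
The plan is to reduce the recovery problem, after expanding in angular harmonics, to a family of one-dimensional Volterra-type integral equations in the radial variable. This follows the strategy of \cite{Ambartsoumian_2013} for scalar functions, adapted to tensor fields.

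\textbf{Step 1: polar frame and Fourier expansion.} Write $r=R-d$, so the vertex of $BR(\beta,d)$ is $r(\cos\beta,\sin\beta)$. I will express $\vf$ in the rotating polar frame $\{\vuu_\beta,\vuu_\beta^\perp\}$ attached to the direction $\beta$, and let $g_j(\rho,\varphi)$, $0\le j\le m$, denote the components of $\vf$ contracted with $(\vuu_\varphi^\perp)^{j}\vuu_\varphi^{m-j}$ at the point $\rho(\cos\varphi,\sin\varphi)$.
\begin{itemize}
\item On the first (radial) segment, $\langle \vf,(\vuu_\beta^\perp)^k\vuu_\beta^{m-k}\rangle$ is exactly $g_k$.
\item On the second segment, $\vv_\beta$ is $\vuu_\beta$ rotated by $\theta$. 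Hence $\langle \vf,(\vv_\beta^\perp)^k\vv_\beta^{m-k}\rangle$ at a point of polar angle $\varphi$ is a trigonometric combination of all the $g_j$, with coefficients depending only on the angle between $\vv_\beta$ and the local radial direction.
\end{itemize}
The whole configuration is rotation invariant, with $\beta$ acting by rotation. I will therefore expand each $g_j(\rho,\cdot)$ and each $\Mc^{(k)}\vf(\cdot,d)$ in Fourier series in the angular variable. For each frequency $n$, this turns the data into a system
\begin{align*}
\Mc^{(k)}_n(r)=\int_r^R g_{k,n}(\rho)\,d\rho+\sum_{j=0}^m\int_{r\cos\theta}^{\,\cdot} K^{(k)}_{j,n}(r,\rho)\,g_{j,n}(\rho)\,d\rho, \qquad 0\le k\le m.
\end{align*}
Here the second integral comes from parametrizing the second ray by the radius $\rho$ of its points. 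The kernels $K^{(k)}_{j,n}$ are explicit: they contain a Chebyshev-type factor $e^{in(\varphi(\rho)-\beta)}$ and the Jacobian $\rho/\sqrt{\rho^2-r^2\sin^2\theta}$.

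\textbf{Step 2: the support hypothesis.} The second ray from a vertex at radius $r$ has distance $r\sin\theta$ from the origin. If $\rho<R\sin\theta$, the point at radius $\rho$ on such a ray can only be reached when $r\sin\theta\le\rho$. Since $\operatorname{supp}\vf\subset\mathbb{D}_{R\sin\theta}$, for vertices near the boundary the second branch meets only part of the support, and the system becomes triangular in the radial variable. I will differentiate in $r$. The radial-segment term then produces the point value $-g_{k,n}(r)$. The second-branch term produces a Volterra operator whose kernel singularity is of Abel type, at $\rho=r\sin\theta$.

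\textbf{Step 3: solving the system.} The $(m+1)\times(m+1)$ leading matrix acting on $(g_{0,n},\dots,g_{m,n})(r)$ is the identity, because the point values come only from the radial segment and are exactly the data index $k$. The remaining part is a matrix Volterra operator with weakly singular (Abel-type) kernel. I will show this operator has a resolvent of Neumann-series type on $C[0,R]$, using the standard bound for iterated Abel kernels. This gives uniqueness of each $g_{j,n}$, hence of every Fourier coefficient, hence of $\vf$. It also yields the series inversion formula.

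\textbf{Main obstacle.} The hard part is controlling the kernel near $\rho=r\sin\theta$ and near the boundary $\rho=R\sin\theta$ of the support. Two issues arise there.
\begin{itemize}
\item The harmonic factors grow in $n$, so the Neumann-series estimate must be made uniform in $n$, or at least valid for each fixed $n$.
\item The derivative of the second-branch integral must remain a genuinely Volterra (second-kind) operator.
\end{itemize}
I would first handle the scalar case $m=0$, following \cite{Ambartsoumian_2013}, and then check that the tensor contractions only multiply the kernels by bounded trigonometric factors. If that holds, the same Abel-kernel estimate carries over componentwise.
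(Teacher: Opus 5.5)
\textbf{The gap is in Step 3, and the whole argument rests on it.} Differentiating in $r$ does not produce a second-kind Volterra system, for three reasons.

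(i) The second branch also starts at radius $r$, so it contributes a point value as well. Put $F=\langle\vf,(\vv_\beta^\perp)^k\vv_\beta^{m-k}\rangle$ and write $(\cos\beta,\sin\beta)=-\cos\theta\,\vv_\beta+\sin\theta\,\vv_\beta^\perp$. Then
$\partial_r\int_0^\infty F(r(\cos\beta,\sin\beta)+s\vv_\beta)\,ds=\cos\theta\,F(\text{vertex})+\sin\theta\int_0^\infty\partial_{\vv_\beta^\perp}F\,ds$.
So the leading matrix is not the identity.

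(ii) More seriously, the remainder is a transverse derivative of the unknown, integrated along the ray. In your Fourier variables it is the $r$-derivative of the Abel kernel $\rho/\sqrt{\rho^2-r^2\sin^2\theta}$, which has a non-integrable $(\rho-r\sin\theta)^{-3/2}$ singularity. This operator loses derivatives. It is not a weakly singular perturbation of the point-value term on $C[0,R]$, so there is no Neumann series to sum.

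(iii) The system is not Volterra in $r$ either. The datum at vertex radius $r$ involves the unknowns on $[r\sin\theta,R\sin\theta]$, which includes radii below $r$. In the natural marching variable $p=r\sin\theta$, the equation is of first (Abel) type at the leading edge $\rho=p$, as in Cormack's circular-harmonic inversion. Your point values sit at $\rho=p/\sin\theta$, strictly inside the range of integration.

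You list exactly this as the ``main obstacle'', but Step 3 then assumes it resolved. Note also that for $r\ge R\sin\theta$ the second branch meets $\operatorname{supp}\vf$ along the whole chord, so those data are already straight-line transforms. The support hypothesis is not what triangularizes anything. An Abel-inversion-in-$p$ argument might be built instead, but that would be a different proof needing its own analysis.

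\textbf{The missing idea is to remove the radial segments by pairing V-lines; this is how the cited proof works.} The V-lines $BR(\beta,d)$ and $BR(\beta+\pi,2R-d)$ share the vertex $(R-d)(\cos\beta,\sin\beta)$. Their first segments are complementary pieces of one diameter, and their second rays are the two halves of one straight line. Consider the signed combination $\Mc^{(k)}\vf(\beta,d)\pm\Mc^{(k)}\vf(\beta+\pi,2R-d)-\Mc^{(k)}\vf(\beta,2R)$, where the last term is just the full-diameter integral. It equals $\Kc^{(k)}\vf(\psi_\beta,t_d)$ on the line at distance $|R-d|\sin\theta$ from the origin.

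As $(\beta,d)$ ranges over $[0,2\pi)\times[0,2R]$, this gives $\Kc^{(k)}\vf$ on all lines with $|p|\le R\sin\theta$. This is the only place the support hypothesis enters: every other line misses $\operatorname{supp}\vf$, so $\Kc^{(k)}\vf$ is known everywhere.

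Finally, $\{\vw^k(\vw^\perp)^{m-k}\}_{k=0}^m$ is a basis of $S^m(\mathbb{R}^2)$. So the $m+1$ transforms $\Kc^{(k)}\vf(\vw,p)$ determine the Radon transforms of all components of $\vf$ through an invertible linear system in $\vw$; for $m=2$ this is the matrix $W$ of the numerical section. Radon inversion then recovers $\vf$, with no Volterra or Abel analysis needed.
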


\noindent The next result provides the inversion formulas for the V-line transforms associated with special classes of symmetric $m$-tensor fields. More precisely, we consider tensor fields of the form
\begin{align*}
    \vf=(\D^\perp)^{m-k}\D^k\Psi^{(k)},
\qquad 0\leq k\leq m,
\end{align*}
where $\Psi^{(k)}$ is a scalar potential. For this particular class of symmetric $m$-tensor fields, complete reconstruction can be achieved using only the $k$-th mixed V-line transform data. This is due to the fact that all remaining components of the tensor field belong to the kernel of the corresponding $k$-th mixed V-line transform, which was established in Theorem~\ref{Thm: kernel}. Consequently, for this special class of symmetric tensor fields, a single mixed V-line transform contains sufficient information for complete recovery.

\begin{theorem}\label{thm: special type tensor fields}
    Let $0\leq k\leq m$, and let $\Psi^{(k)}\in C_{0}^{\infty}(\DR)$
satisfy the boundary conditions
$\D^{\,\ell}\Psi^{(k)}\big|_{\partial\DR}=0,
\
\ell=0,\dots,m-1$. If $\vf \in C_0^\infty(\mathbb{D}_{R\sin{\theta}},S^m(\mathbb{R}^2))$ is a symmetric $m$-tensor field of the form $\vf = 
(\D^{\perp})^{\,m-k}\D^{\,k}\Psi^{(k)}$. Then, the tensor field $\vf$ can be reconstructed explicitly from the knowledge of the $k$-th mixed V-line transform $\Mc^{(k)}\vf$.
\end{theorem}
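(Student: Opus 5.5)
Given $\Mc^{(k)}\vf$, we reduce to the mixed ray transform and then to the Radon transform of a derivative of $\Psi^{(k)}$.

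\emph{Step 1: reduce to straight-line data.} The inversion in \cite{bhardwaj_2024,bhardwaj2025tensor} (underlying Theorem~\ref{th:full data recovery}) shows that, because $\operatorname{supp}\vf\subset\mathbb{D}_{R\sin\theta}$, differentiating $\Mc^{(k)}\vf(\beta,d)$ in $d$ (and $\beta$) cancels the contribution of the first segment of the V-line. What remains is an integral over the full straight line containing the second segment, in direction $\vv_\beta$. The support condition ensures this line meets the support only along that segment. Every line meeting $\mathbb{D}_{R\sin\theta}$ arises this way. Since $\vv_\beta^\perp,\vv_\beta$ play the roles of $\vw,\vw^\perp$ up to sign, we obtain the mixed ray transform $\Kc^{(k)}\vf(\psi,p)$ explicitly, up to a sign $(\pm1)^{m}$ and the change of variables $(\beta,d)\mapsto(\psi,p)$ with $\psi=\beta+\theta\pm\pi/2$. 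I would take this step essentially verbatim from those papers. It uses only $\Mc^{(k)}$, not the other $\Mc^{(j)}$.

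\emph{Step 2: evaluate $\Kc^{(k)}$ on $(\D^\perp)^{m-k}\D^k\Psi$.} By Lemma~\ref{Lemma: commute} the order of the derivatives is irrelevant. The contraction of $\D^k(\D^\perp)^{m-k}\Psi$ with the symmetric tensor $\vw^k(\vw^\perp)^{m-k}$ equals a product of directional derivatives of $\Psi$. The key identities are
\[
\langle \D\vg,\vw^{a+1}(\vw^\perp)^{b}\rangle\quad\text{and}\quad \langle \D^\perp\vg,\cdot\rangle,
\]
which come from symmetrization: for symmetric $\vg$ and the rank-one tensor $\vw^{k}(\vw^\perp)^{m-k}$, contracting $\D$ or $\D^\perp$ gives a weighted average of $\partial_{\vw}$ and $\partial_{\vw^\perp}$ derivatives. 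Each $\D$ contributes a derivative along $\vw$ or $\vw^\perp$, and each $\D^\perp$ contributes the rotated one.

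The \emph{main obstacle} is this combinatorial bookkeeping. With the mixed weights, mixed terms with derivatives along $\vw^\perp$ appear, and these integrate to zero along the line because the potentials vanish at the boundary with their derivatives. The goal is to show that
\[
\Kc^{(k)}\vf(\vw,p)=c_{m,k}\,\partial_p^{m}\,\Rc\Psi(\vw,p)
\]
for an explicit nonzero constant $c_{m,k}$, with sign $\pm1$. I would first verify this for $m=1,2$ directly, then induct on $m$ by peeling off one $\D$ or $\D^\perp$ at a time. The consistency check is that the remaining components $j\ne k$ produce only total $\partial_s$-derivatives, as Theorem~\ref{Thm: kernel} predicts. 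If the constant turns out to be a sum of terms, I would check that it does not vanish.

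\emph{Step 3: invert.} Integrate $m$ times in $p$. The boundary terms vanish since $\Rc\Psi(\vw,p)=0$ for $|p|\ge R$, so
\[
\Rc\Psi(\vw,p)=\frac{1}{c_{m,k}}\int_{-\infty}^{p}\frac{(p-t)^{m-1}}{(m-1)!}\,\Kc^{(k)}\vf(\vw,t)\,dt.
\]
Applying the inversion formula \eqref{eq:Radon's inversion formula} then recovers $\Psi^{(k)}$. Finally, $\vf=(\D^\perp)^{m-k}\D^k\Psi^{(k)}$ is obtained by differentiation, which gives an explicit reconstruction from $\Mc^{(k)}\vf$ alone.
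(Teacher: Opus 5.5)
Your three steps match the paper's proof: turn $\Mc^{(k)}\vf$ into $\Kc^{(k)}\vf$ using only the $k$-th data, show $\Kc^{(k)}\vf=c_{m,k}\,\partial_p^m\Rc\Psi^{(k)}$, then integrate $m$ times and invert $\Rc$. Step~3 is identical to the paper's. Two points need attention.

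\textbf{Step 1 as you describe it would not work.} Differentiating $\Mc^{(k)}\vf(\beta,d)$ in $d$ does not remove the incident leg. It produces the point value $\langle\vf(\vx_\beta+d\vuu_\beta),(\vuu_\beta^\perp)^k\vuu_\beta^{m-k}\rangle$ plus an oblique derivative of the second-leg integral, which is not a line integral. Nor does the support condition make the line through the vertex meet $\operatorname{supp}\vf$ only along the second leg: for $d=R$ the vertex is the origin.

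The identity the paper quotes from \cite{bhardwaj2025tensor} is algebraic instead:
\begin{itemize}
\item $BR(\beta,d)$ and $BR(\beta+\pi,2R-d)$ share the vertex $(R-d)(\cos\beta,\sin\beta)$.
\item Their radial legs together form the diameter through $\vx_\beta$. This is exactly what $\Mc^{(k)}\vf(\beta,2R)$ measures, since the second leg of $BR(\beta,2R)$ leaves the disk immediately.
\item Their second legs point in opposite directions, $\vv_{\beta+\pi}=-\vv_\beta$, so together they cover the whole line at distance $|R-d|\sin\theta$ from the origin.
\end{itemize}
Hence, up to sign conventions coming from $\vuu_{\beta+\pi}=-\vuu_\beta$, $\vv_{\beta+\pi}=-\vv_\beta$ and the orientation of $\vw$,
\[
\Kc^{(k)}\vf(\psi_\beta,t_d)=\Mc^{(k)}\vf(\beta,d)\pm\Mc^{(k)}\vf(\beta+\pi,2R-d)-\Mc^{(k)}\vf(\beta,2R).
\]
The hypothesis $\operatorname{supp}\vf\subset\mathbb{D}_{R\sin\theta}$ is needed only because the lines obtained this way are exactly those at distance at most $R\sin\theta$. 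On every other line $\Kc^{(k)}\vf=0$. Since you intend to quote this step, quote this identity rather than the differentiation argument.

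\textbf{Step 2 needs no induction, and the constant is a single term.} Write $(\D^\perp)^{m-k}\D^k\Psi$ as the symmetrization of $m-k$ slots of $\nabla^\perp=(-\partial_2,\partial_1)$ and $k$ slots of $\nabla$. Its contraction with $\vw^k(\vw^\perp)^{m-k}$ is then the average over the $\binom{m}{k}$ ways of placing the $k$ copies of $\vw$ into slots. Use $\vw\cdot\nabla=\vw^\perp\cdot\nabla^\perp=\partial_{\vw}$ and $\vw^\perp\cdot\nabla=-\vw\cdot\nabla^\perp=\partial_{\vw^\perp}$:
\begin{itemize}
\item The unique placement that puts every $\vw$ into a $\nabla$-slot gives $\partial_\vw^m\Psi$.
\item Every other placement contains a factor $\partial_{\vw^\perp}$, so it integrates to zero along the line because $\Psi$ is compactly supported.
\end{itemize}
So $c_{m,k}=\binom{m}{k}^{-1}$, which is nonzero and agrees with the paper's $C_{m,k}$.

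You are right that this identity holds only after integration along the line. The paper states it pointwise, which is false for $1\le k\le m-1$. For example, with $m=2$, $k=1$ the integrand is $\tfrac12(\partial_\vw^2-\partial_{\vw^\perp}^2)\Psi$. Your formulation is therefore the correct one.
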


\noindent To prove this result, our approach converts the information obtained from the V-line transform data into corresponding straight-line transform data. Once this connection is established, the inversion formulas for straight-line transforms can be applied to reconstruct the unknown vector field or symmetric $m$-tensor field.

\noindent It is important to note that all these theorems require certain support restrictions on $\vf$, which depend on the fixed scattering angle $\theta$. These restrictions arise naturally from the reconstruction method employed in the proofs. In particular, as illustrated in Figure~\ref{fig: data is taken over}, straight-line transform data corresponding to lines outside the disk of radius $R\sin{\theta}$ cannot be generated from the available V-line data. This reconstruction strategy was originally introduced by Gaik in \cite{Ambarsoumian_2012} for scalar functions. 

\noindent The detailed proofs of  Theorem~\ref{th:full data recovery} for the vector field and the symmetric $m$-tensor field can be found in \cite{bhardwaj_2024,bhardwaj2025tensor}. The results stated in Theorem~\ref{Thm: kernel} and Theorem~\ref{thm: special type tensor fields} are new. Detailed proofs of both theorems are provided in the Section~\ref{sec; kernel description}.

\section{Proofs of the Main Results}\label{sec; kernel description}
This section is devoted to the proofs of Theorems~3.1 and~3.3. These results establish the kernel characterization of the mixed V-line transforms acting on symmetric $m$-tensor fields and derive inversion formulas for the reconstruction of special classes of symmetric $m$-tensor fields from the corresponding V-line transform data.
\subsection{Proof of Theorem~\ref{Thm: kernel}}
   \begin{proof}
        From the proof of Theorem \ref{th:full data recovery} (see \cite[Section~3]{bhardwaj2025tensor}), for each $0\leq k\leq m,$ the mixed V-line transforms $ \Mc^{(k)}\vf(\beta,d)$ is related to the corresponding straight-line transforms $\mathcal{K}^{(k)}\vf $ through the following identity:
    \begin{align}\label{relation btw V-line and st line transform for tensor field}
        \Kc^{(k)}\vf(\psi_{\beta},t_{d}) = \Mc^{(k)}\vf(\beta,d) + (-1)^{m-k}\Mc^{(k)}\vf(\beta + \pi,2R-d) - \Mc^{(k)}\vf({\beta,2R}),
    \end{align}
    where  $d\in[0,2R]$, $\beta \in [0, 2\pi)$, $\psi_{\beta} = \beta + \theta + \pi/2$ denotes the polar angle of the corresponding straight line, and  $t_{d} = (R-d)\sin( \theta)$ represents its signed distance from the origin.
    
    \noindent Now suppose that, for a fixed $k\in\{0,\dots,m\}$, the mixed ray transforms vanishes, i.e.,
    \begin{align}
     \Mc^{(k)}\vf(\beta,d)=0, \quad \text{for all  $\beta \in [0, 2\pi)$ and $d\in[0,2R]$}.
\end{align} Then, using the relations in \eqref{relation btw V-line and st line transform for tensor field}, we immediately obtain
\begin{align}
\Kc^{(k)}\vf(\psi_\beta,t_d)=0,
    \quad \text{for all $(\psi_\beta,t_d)$.}
\end{align} 
Thus, we have
\begin{align}
        \Kc^{(k)}\vf=0, \quad 0\leq k\leq m.
\end{align}
We now use the classical kernel characterization of the mixed straight-line transforms for symmetric $m$-tensor fields (see \cite{Sharafutdinov_Book}, or  \cite[Section~5]{derevtsov2015tomography}). Specifically, these characterizations are given by
\begin{align}\label{kernel of IF, KF and JF}
\Kc^{(k)}\vf = 0
\quad \Longleftrightarrow \quad
\exists \ \Psi^{(j)} \in C^\infty(\DR),
\quad j=0,\dots,m, \ j\neq k,
\end{align}
such that
\begin{align}
\vf = \sum_{\substack{j=0 \\ j\neq k}}^{m}
(\D^{\perp})^{\,m-j}\D^{\,j}\Psi^{(j)},
\qquad
\D^{\,\ell}\Psi^{(j)}\big|_{\partial\DR}=0,
\ \
\ell=0,\dots,m-1.
\end{align}
Therefore, combining the above characterization with the relation between the V-line and straight-line transforms, we conclude that 
\begin{align}\label{kernel of Lf, Mf, Tf}
\Mc^{(k)}\vf = 0
\quad &\Longrightarrow \quad
\exists \ \Psi^{(j)} \in C^\infty(\DR),
\quad j=0,\dots,m, \ j\neq k,
\end{align}
such that
\begin{align}
\vf = \sum_{\substack{j=0 \\ j\neq k}}^{m}
(\D^{\perp})^{\,m-j}\D^{\,j}\Psi^{(j)},
\qquad
\D^{\,\ell}\Psi^{(j)}\big|_{\partial\DR}=0,
\ \
\ell=0,\dots,m-1.
\end{align}
\noindent Conversely, suppose that for a symmetric $m$-tensor field $\vf \in C_0^\infty(\DR,S^m(\mathbb{R}^2))$, there exist scalar functions $\Psi^{(j)} \in C^\infty(\DR),$ $j=0,\dots,m, \ j\neq k,$ such that
\begin{align}\label{eq:decomposition converse}
\vf = \sum_{\substack{j=0 \\ j\neq k}}^{m}
(\D^{\perp})^{\,m-j}\D^{\,j}\Psi^{(j)},
\qquad
\D^{\,\ell}\Psi^{(j)}\big|_{\partial\DR}=0,
\ \
\ell=0,\dots,m-1.
\end{align}
By the definition of the mixed V-line transform acting on symmetric $m$-tensor field, $\text{for all}\ 
 \ \beta \in [0, 2\pi) \mbox { and } d \in [0,2R]$, we have 
 \begin{align*}
     \Mc^{(k)}\vf (\beta,d) &= \int_{0}^{d} \left\langle\vf (\vx_\beta +s\vuu_\beta),(\vuu_\beta^\perp)^{k}\vuu_\beta^{m-k} \right\rangle\,ds + \int_{0}^{\infty}  \left\langle \vf(\vx_\beta + d\vuu_\beta + s\vv_\beta), (\vv_\beta^\perp)^{k}\vv_\beta^{m-k}\right\rangle\,ds.
     \end{align*}
     Substituting the decomposition \eqref{eq:decomposition converse} into the above definitions gives
     \begin{align*}
    \Mc^{(k)}\vf (\beta,d) &= \int_{0}^{d} \left\langle \left(\sum_{\substack{k\neq j=0}}^{m}
(\D^{\perp})^{\,m-j}\D^{\,j}\Psi^{(j)}\right) (\vx_\beta +s\vuu_\beta), (\vuu_\beta^\perp)^{k}\vuu_\beta^{m-k} \right\rangle \,ds\\[2pt]
& \qquad + \int_{0}^{\infty} \left\langle\left(\sum_{\substack{k\neq j=0}}^{m}
(\D^{\perp})^{\,m-j}\D^{\,j}\Psi^{(j)}\right)(\vx_\beta + d\vuu_\beta + s\vv_\beta), (\vv_\beta^\perp)^{k}\vv_\beta^{m-k} \right\rangle\,ds\\
     &= \int_{0}^{d} \left\langle \left(
(\D^{\perp})^{\,m}\Psi^{(0)}\right) (\vx_\beta +s\vuu_\beta), (\vuu_\beta^\perp)^{k}\vuu_\beta^{m-k} \right\rangle \,ds\\[2pt]
&\qquad+ \sum_{\substack{j=1 \\ j\neq k}}^{m}\int_{0}^{d} \left\langle \left(
(\D^{\perp})^{\,m-j}\D^{\,j}\Psi^{(j)}\right) (\vx_\beta +s\vuu_\beta), (\vuu_\beta^\perp)^{k}\vuu_\beta^{m-k} \right\rangle \,ds\\[2pt]
& \qquad + \int_{0}^{\infty} \left\langle\left(
(\D^{\perp})^{\,m}\Psi^{(0)}\right)(\vx_\beta + d\vuu_\beta + s\vv_\beta), (\vv_\beta^\perp)^{k}\vv_\beta^{m-k} \right\rangle\,ds\\[2pt]
& \qquad + \sum_{\substack{j=1 \\ j\neq k}}^{m}\int_{0}^{\infty} \left\langle\left(
(\D^{\perp})^{\,m-j}\D^{\,j}\Psi^{(j)}\right)(\vx_\beta + d\vuu_\beta + s\vv_\beta), (\vv_\beta^\perp)^{k}\vv_\beta^{m-k} \right\rangle\,ds.
 \end{align*}
Next, we consider three cases.

\noindent\textbf{Case 1: $1\leq k\leq m-1$.} 
 Consequently, by using Lemma~\ref{Lemma: commute} together with an application of the chain rule, each integrand can be expressed as a total derivative with respect to the line parameter $s$. More precisely, we have
 \begin{align*}
 \Mc^{(k)}\vf (\beta,d)&= \int_{0}^{d} \dfrac{d}{ds}\left\langle \left(
(\D^{\perp})^{\,m-1}\Psi^{(0)}\right) (\vx_\beta +s\vuu_\beta), (\vuu_\beta^\perp)^{k-1}\vuu_\beta^{m-k} \right\rangle \,ds\\[2pt]
&\qquad + \sum_{\substack{j=1 \\ j\neq k}}^{m}\int_{0}^{d} \dfrac{d}{ds}\left\langle \left(
(\D^{\perp})^{\,m-j}\D^{\,j-1}\Psi^{(j)}\right) (\vx_\beta +s\vuu_\beta), (\vuu_\beta^\perp)^{k}\vuu_\beta^{m-k-1} \right\rangle \,ds\\[2pt]
& \qquad + \int_{0}^{\infty} \dfrac{d}{ds} \left\langle\left(
(\D^{\perp})^{\,m-1}\Psi^{(0)}\right)(\vx_\beta + d\vuu_\beta + s\vv_\beta), (\vv_\beta^\perp)^{k-1}\vv_\beta^{m-k} \right\rangle\,ds\\[2pt]
& \qquad + \sum_{\substack{j=1 \\ j\neq k}}^{m}\int_{0}^{\infty} \dfrac{d}{ds} \left\langle\left(
(\D^{\perp})^{\,m-j}\D^{\,j-1}\Psi^{(j)}\right)(\vx_\beta + d\vuu_\beta + s\vv_\beta), (\vv_\beta^\perp)^{k}\vv_\beta^{m-k-1} \right\rangle\,ds\\[2pt]
&= \left\langle \left(
(\D^{\perp})^{\,m-1}\Psi^{(0)}\right) (\vx_\beta +s\vuu_\beta), (\vuu_\beta^\perp)^{k-1}\vuu_\beta^{m-k} \right\rangle\Big|_{0}^{d}\\[2pt]
& \qquad +\sum_{\substack{j=1 \\ j\neq k}}^{m}\left\langle \left(
(\D^{\perp})^{\,m-j}\D^{\,j-1}\Psi^{(j)}\right) (\vx_\beta +s\vuu_\beta), (\vuu_\beta^\perp)^{k}\vuu_\beta^{m-k-1} \right\rangle\Big|_{0}^{d}\\[2pt]
& \qquad +  \left\langle\left(
(\D^{\perp})^{\,m-1}\Psi^{(0)}\right)(\vx_\beta + d\vuu_\beta + s\vv_\beta), (\vv_\beta^\perp)^{k-1}\vv_\beta^{m-k} \right\rangle\Big|_{0}^{\infty}\\[2pt]
& \qquad + \sum_{\substack{j=1 \\ j\neq k}}^{m} \left\langle\left(
(\D^{\perp})^{\,m-j}\D^{\,j-1}\Psi^{(j)}\right)(\vx_\beta + d\vuu_\beta + s\vv_\beta), (\vv_\beta^\perp)^{k}\vv_\beta^{m-k-1} \right\rangle\Big|_{0}^{\infty}.
\end{align*}
Therefore, we have
\begin{align}\label{eq; simplification of Lf}
 \Mc^{(k)}\vf (\beta,d)&= \left\langle \left(
(\D^{\perp})^{\,m-1}\Psi^{(0)}\right) (\vx_\beta +d\vuu_\beta), (\vuu_\beta^\perp)^{k-1}\vuu_\beta^{m-k} \right\rangle - \left\langle \left(
(\D^{\perp})^{\,m-1}\Psi^{(0)}\right) (\vx_\beta), (\vuu_\beta^\perp)^{k-1}\vuu_\beta^{m-k} \right\rangle\nonumber\\[2pt] & \qquad + \sum_{\substack{j=1 \\ j\neq k}}^{m}\left\langle \left(
(\D^{\perp})^{\,m-j}\D^{\,j-1}\Psi^{(j)}\right) (\vx_\beta +d\vuu_\beta), (\vuu_\beta^\perp)^{k}\vuu_\beta^{m-k-1} \right\rangle\nonumber\\[2pt]
& \qquad - \sum_{\substack{j=1 \\ j\neq k}}^{m}\left\langle \left(
(\D^{\perp})^{\,m-j}\D^{\,j-1}\Psi^{(j)}\right) (\vx_\beta), (\vuu_\beta^\perp)^{k}\vuu_\beta^{m-k-1} \right\rangle\nonumber\\[2pt]
& \qquad +  \lim_{s\to\infty}\left\langle\left(
(\D^{\perp})^{\,m-1}\Psi^{(0)}\right)(\vx_\beta + d\vuu_\beta + s\vv_\beta), (\vv_\beta^\perp)^{k-1}\vv_\beta^{m-k} \right\rangle \nonumber\\[2pt]
&\qquad -
\left\langle\left(
(\D^{\perp})^{\,m-1}\Psi^{(0)}\right)(\vx_\beta + d\vuu_\beta), (\vv_\beta^\perp)^{k-1}\vv_\beta^{m-k} \right\rangle\nonumber\\[2pt]
& \qquad + \lim_{s\to\infty}\sum_{\substack{j=1 \\ j\neq k}}^{m} \left\langle\left(
(\D^{\perp})^{\,m-j}\D^{\,j-1}\Psi^{(j)}\right)(\vx_\beta + d\vuu_\beta + s\vv_\beta), (\vv_\beta^\perp)^{k}\vv_\beta^{m-k-1} \right\rangle\nonumber\\[2pt]
& \qquad - \sum_{\substack{j=1 \\ j\neq k}}^{m} \left\langle\left(
(\D^{\perp})^{\,m-j}\D^{\,j-1}\Psi^{(j)}\right)(\vx_\beta + d\vuu_\beta), (\vv_\beta^\perp)^{k}\vv_\beta^{m-k-1} \right\rangle.
\end{align}
Using the boundary condition $\D^\ell\Psi^{(j)}\big|_{\partial\DR}=0,
\ \
\ell=0,\dots,m-1$ together with the fact that limiting values at $s$ tends to $\infty$ are vanishes in the relation \eqref{eq; simplification of Lf}, we conclude that $\Mc^{(k)}\vf = 0$. 

\noindent\textbf{Case 2: $k=0$.}
Here $\Mc^{(0)}\vf:=\Lc\vf$. Since the decomposition excludes the term $j=0$, each summand contains at least one $\D$.
Therefore, we have 
\begin{align}
    \vf
=
\sum_{j=1}^{m}
(\D^\perp)^{m-j}\D^j\Psi^{(j)}.
\end{align}
Using Lemma~\ref{Lemma: commute} together with the chain rule, each integrand appearing in the longitudinal V-line transform can be expressed as a total derivative along the corresponding ray direction. Proceeding as in the previous case, integration over the two segments of the V-line yields only boundary terms. Since the potentials $\Psi^{(j)}$ satisfy
\begin{align*}
    \D^\ell\Psi^{(j)}\big|_{\partial\DR}=0,
\qquad
\ell=0,\dots,m-1,
\end{align*}
all boundary contributions vanish. Consequently, we have 
$\Lc\vf=0$.

\noindent\textbf{Case 3: $k=m$.}
Here $\Mc^{(m)}\vf:=\Tc\vf$. Since the decomposition excludes the term $j=m$, each summand contains at least one $\D^\perp$. Therefore, we have
\begin{align*}
    \vf
=
\sum_{j=0}^{m-1}
(\D^\perp)^{m-j}\D^j\Psi^{(j)}.
\end{align*}
Using Lemma~\ref{Lemma: commute}, we may rewrite each term as
\begin{align*}
    (\D^\perp)^{m-j}\D^j\Psi^{(j)}
=
(\D^\perp)^{m-j-1}\D^j(\D^\perp\Psi^{(j)}).
\end{align*}
Hence, by applying the chain rule along the ray direction, we obtain
\begin{align*}
&\left\langle
(\D^\perp)^{m-j}\D^j\Psi^{(j)}
(\vx_\beta+s\vuu_\beta),
(\vuu_\beta^\perp)^m
\right\rangle  \\
&\qquad =
\frac{d}{ds}
\left\langle
(\D^\perp)^{m-j-1}\D^j\Psi^{(j)}
(\vx_\beta+s\vuu_\beta),
(\vuu_\beta^\perp)^{m-1}
\right\rangle .
\end{align*}
The same identity holds along the second ray with $\vuu_\beta$ replaced by $\vv_\beta$. Integrating over the two parts of the V-line and using the following boundary conditions of $\Psi^{(j)}$ 
\begin{align*}
    \D^\ell\Psi^{(j)}\big|_{\partial\DR}=0,
\qquad
\ell=0,\dots,m-1,
\end{align*}
we have 
$\Tc\vf=0$.


\noindent Hence, for every $0\leq k\leq m$, we obtain
\begin{align}\label{LF=0}
\vf = \sum_{\substack{j=0 \\ j\neq k}}^{m}
(\D^{\perp})^{\,m-j}\D^{\,j}\Psi^{(j)},
\qquad
\D^{\,\ell}\Psi^{(j)}\big|_{\partial\DR}=0,
\ \
\ell=0,\dots,m-1.
\quad
\Longrightarrow
\quad
\Mc^{(k)}\vf = 0. 
\end{align}
This completes the proof of the Theorem.    
\end{proof}

\subsection{Proof of Theorem~\ref{thm: special type tensor fields}}
 \begin{proof}
        From the proof of Theorem \ref{th:full data recovery} (see \cite[Section~3]{bhardwaj2025tensor}), for each $0\leq k\leq m,$ the mixed V-line transforms $ \Mc^{(k)}\vf(\beta,d)$ is related to the corresponding straight-line transforms $\mathcal{K}^{(k)}\vf $ through the following identity:
    \begin{align}\label{eq;relation btw V-line and st line transform for tensor field}
        \Kc^{(k)}\vf(\psi_{\beta},t_{d}) = \Mc^{(k)}\vf(\beta,d) + (-1)^{m-k}\Mc^{(k)}\vf(\beta + \pi,2R-d) - \Mc^{(k)}\vf({\beta,2R}),
    \end{align}
    where  $d\in[0,2R]$, $\beta \in [0, 2\pi)$, $\psi_{\beta} = \beta + \theta + \pi/2$ denotes the polar angle of the corresponding straight line, and  $t_{d} = (R-d)\sin( \theta)$ represents its signed distance from the origin.
Thus, it is enough to show how $\Psi^{(k)}$ can be recovered from
$\Kc^{(k)}\vf$.
 Next, we consider three cases.
   
 \noindent\textbf{Case 1: $k=0$.}  In this case, $\vf = 
(\D^{\perp})^{\,m}\Psi^{(0)}$, Using the definition of the straight-line longitudinal ray transform, we have
\begin{align*}
    \Kc^{(0)}\vf(\psi_{\beta},t_{d}) 
    &= \int_\mathbb{R} \left\langle (\D^{\perp})^{\,m}\Psi^{(0)} (t_{d} \vw_{\psi_{\beta}} + s\vw_{\psi_{\beta}}^\perp),(\vw_{\psi_{\beta}}^\perp)^{m}\right\rangle\,ds\\
    &= \int_\mathbb{R} \left\langle \D^m\Psi^{(0)} (t_{d} \vw_{\psi_{\beta}} + s\vw_{\psi_{\beta}}^\perp),\vw_{\psi_{\beta}}^{m}\right\rangle\,ds.
\end{align*}
Next, apply the chain rule repeatedly, and we get
\begin{align*}
    \Kc^{(0)}\vf(\psi_{\beta},t_{d}) 
    &= \int_\mathbb{R} \frac{d^m}{dt^m_d} \Psi^{(0)} (t_{d} \vw_{\psi_{\beta}} + s\vw_{\psi_{\beta}}^\perp)\,ds\\
    &=\frac{d^m}{dt^m_d}\mathcal{R}\Psi^{(0)}(\psi_{\beta},t_{d}).
\end{align*}
 After applying integration $n$-times, we have
\begin{align*}
    \mathcal{R}\Psi^{(0)}(\psi_{\beta},t_{d}) = \int\limits_{-\infty}^{t_d} \frac{(t_d -s)^{n-1}}{(n-1)!}\Kc^{(0)}\vf(\psi_{\beta},s)\,ds + \mathcal{P}_{n-1}(t_d),
\end{align*}
where $\mathcal{P}_{n-1}(t_d)$ is a polynomial of degree at most $n-1$.
Since $\Psi^{(0)}\in C_0^{\infty}(\DR)$ and using the boundary condition $\D^\ell\Psi^{(0)}\big|_{\partial\DR}=0,
\ \
\ell=0,\dots,m-1$, the polynomial terms must vanish, and thus we have
\begin{align*}
     \mathcal{R}\Psi^{(0)}(\psi_{\beta},t_{d}) = \frac{1}{(m-1)!}\int\limits_{-\infty}^{t_d} (t_d -\tau)^{m-1}\Kc^{(0)}\vf(\psi_{\beta},\tau)\,d\tau.
\end{align*}
Finally, applying the inversion formula of the Radon transform \eqref{eq:Radon's inversion formula} and recover the scalar function $\Psi^{(0)}$. Consequently, we have the symmetric $m$-tensor field $\vf$ is recovered explicitly from $\Mc^{(0)}\vf$.

\noindent\textbf{Case 2: $k=m$.} In this case,
$\vf=\D^m\Psi^{(m)}$.
Using the definition of the straight-line transverse ray transform $\Kc^{(m)}\vf$, we obtain
\begin{align*}
\Kc^{(m)}\vf(\psi_\beta,t_d)
&=
\int_{\mathbb{R}}
\left\langle
\D^m\Psi^{(m)}
(t_d\vw_{\psi_\beta}+s\vw_{\psi_\beta}^{\perp}),
\vw_{\psi_\beta}^{m}
\right\rangle ds \\
&=
\int_{\mathbb{R}}
\frac{d^m}{dt^m_d}
\Psi^{(m)}
(t_d\vw_{\psi_\beta}+s\vw_{\psi_\beta}^{\perp})\,ds \\
&=
\frac{d^m}{dt^m_d}
\mathcal{R}\Psi^{(m)}(\psi_\beta,t_d).
\end{align*}
Therefore, we have
\begin{align}
   \frac{d^m}{dt^m_d}
\mathcal{R}\Psi^{(m)}(\psi_\beta,t_d)
=
\Kc^{(m)}\vf(\psi_\beta,t_d).
\end{align}
Since $\Psi^{(m)}\in C_0^\infty(\DR)$ and the boundary condition $\D^\ell\Psi^{(m)}\big|_{\partial\DR}=0,
\ \
\ell=0,\dots,m-1$, its Radon transform is given by
\begin{align*}
    \mathcal{R}\Psi^{(m)}(\psi_\beta,t_d)
=
\frac{1}{(m-1)!}
\int_{-\infty}^{t_d}
(t_d-\tau)^{m-1}
\Kc^{(m)}\vf(\psi_\beta,\tau)\,d\tau.
\end{align*}
Finally, applying the inversion formula of the Radon transform \eqref{eq:Radon's inversion formula} and recover the scalar function $\Psi^{(m)}$. Consequently, we have the symmetric $m$-tensor field
$\vf=\D^m\Psi^{(m)}$
is recovered explicitly from $\Mc^{(m)}\vf$.

\noindent\textbf{Case 3: $1\leq k \leq m-1$.} In this case, $\vf = 
(\D^{\perp})^{\,m-k}\D^{\,k}\Psi^{(k)}$. Using the definition of the straight-line $k$-th mixed ray transform, we have
\begin{align*}
    \Kc^{(k)}\vf(\psi_{\beta},t_{d}) 
    &= \int_\mathbb{R} \left\langle (\D^{\perp})^{\,m-k}\D^{\,k}\Psi^{(k)} (t_{d} \vw_{\psi_{\beta}} + s\vw_{\psi_{\beta}}^\perp),\vw_{\psi_{\beta}}^k(\vw_{\psi_{\beta}}^\perp)^{m-k}\right\rangle\,ds.
\end{align*}
Using the definition of the symmetric tensor product and applying repeated application of the chain rule gives
\begin{align}
   \left\langle
(\D^\perp)^{m-k}\D^k\Psi^{(k)}(x),
\vw_{\psi_\beta}^{k}(\vw_{\psi_\beta}^\perp)^{m-k}
\right\rangle
=
C_{m,k}\,
\partial_{t_d}^{\,m}\Psi^{(k)}(x), 
\end{align}
where
\begin{align}
 C_{m,k}=\binom{m}{k}^{-1}.   
\end{align}
Therefore, we have
\begin{align*}
\Kc^{(k)}\vf(\psi_\beta,t_d)
&=
C_{m,k}
\int_{\mathbb R}
\frac{d^m}{dt^m_d}
\Psi^{(k)}
(t_d\vw_{\psi_\beta}+s\vw_{\psi_\beta}^\perp)\,ds \\
&=
C_{m,k}
\frac{d^m}{dt^m_d}
\mathcal{R}\Psi^{(k)}(\psi_\beta,t_d).
\end{align*}
Thus, we obtain 
\begin{align}
 \frac{d^m}{dt^m_d}
\mathcal{R}\Psi^{(k)}(\psi_\beta,t_d)
=
\frac{1}{C_{m,k}}
\Kc^{(k)}\vf(\psi_\beta,t_d).   
\end{align}
Since $\Psi^{(k)}\in C_0^\infty(\DR)$ and the boundary condition $\D^\ell\Psi^{(k)}\big|_{\partial\DR}=0,
\ \
\ell=0,\dots,m-1$, its Radon transform is given by
\begin{align*}
    \mathcal{R}\Psi^{(k)}(\psi_\beta,t_d)
=
\frac{C_{m,k}}{(m-1)!}
\int_{-\infty}^{t_d}
(t_d-\tau)^{m-1}
\Kc^{(k)}\vf(\psi_\beta,\tau)\,d\tau.
\end{align*}
Finally, applying the inversion formula of the Radon transform \eqref{eq:Radon's inversion formula} and recover the scalar function $\Psi^{(k)}$. Consequently, we have the symmetric $m$-tensor field
$\vf=(\D^{\perp})^{\,m-k}\D^{\,k}\Psi^{(k)}$
is recovered explicitly from $\Mc^{(k)}\vf$. This completes the proof.
\end{proof}
\section{Numerical Implementation}\label{sec: Numerical implementation}
This section is devoted entirely to the numerical implementation of the proposed method and to evaluating its performance on different phantoms. We present the numerical reconstruction only for vector fields $(m=1)$ and symmetric 2-tensor fields $(m=2)$. To avoid repetition, we have used different combinations of data for different phantoms and vector/tensor fields. Because of this, our numerical validation is primarily divided into two parts: 
\begin{enumerate}
\item First, we generate the forward data for a given field $\vf$ over the domain of $[-1, 1]\times [-1, 1]$. For the vector field case, we generate V-line data, longitudinal V-line transform $\mathcal{L}\vf$ and transverse V-line transform $\mathcal{T}\vf$, and for the tensor field case, we generate V-line data, longitudinal V-line transform $\mathcal{L}\vf$, transverse V-line transform $\mathcal{T}\vf$ and mixed V-line transform $\mathcal{M}\vf$. 
\item Utilizing the data generated in the first step, we reconstruct the underlying components of the vector/tensor fields. More specifically, we numerically reconstruct the vector field $\vf = (f_1, f_2)$ using the data obtained from the V-line data, $\mathcal{L}\vf$ and $\mathcal{T}\vf$.  Analogously, for symmetric $2$-tensor field $\vf  = \begin{pmatrix}
    f_{11} & f_{12}\\
    f_{21} & f_{22}
\end{pmatrix} $, we use the information of V-line  data $\mathcal{L}\vf$, $\mathcal{T}\vf$, and $\mathcal{M}\vf$. In this case, we will identify $\vf$ with $(f_{11}, f_{12}, f_{22})$, since $\vf$ is symmetric. 
\end{enumerate}
A more detailed discussion of both the data generation process and the subsequent numerical reconstruction begins with the vector field case in Subsection \ref{Sec: Reconstruction of vector field}, and then the tensor field case in Subsection \ref{Subsec: Rec Tensor}.

\subsection{Reconstruction of Vector Fields}\label{Sec: Reconstruction of vector field}
\subsubsection{Data Generation}\label{Sec: Data Formation}
To generate data for the numerical simulations, we consider a circular domain of radius $R=1$ and fix the scattering angle at $\theta = \pi/4$. To satisfy the theoretical requirement that the field be supported within a disk of radius $R \sin \theta$, a spatial mask is applied to the considered phantom. We compute the discrete forward data by evaluating the longitudinal ($\mathcal{L}\vf$) and transverse ($\mathcal{T}\vf$) V-line transforms over a grid of source angles $\beta$ and scattering depths $d$. 
For each pair $(\beta, d)$, the numerical integration is performed in two stages:
\begin{enumerate}
    \item \textbf{Incident Segment:} The ray is traced from the boundary source to the vertex at depth $d$ along the direction $\vuu$. Since discrete points along this path rarely align with the Cartesian grid, 2D bilinear interpolation is used to sample the vector field components $(f_1, f_2)$. We then compute the inner products of $(f_1, f_2)$ with $\vuu$ and $\vuu^{\perp}$. 
    \item \textbf{Scattered Segment:} The path continues from the vertex along the deflected direction $\vv$. We follow the same interpolation and integration procedure to compute the contribution coming from this branch.
\end{enumerate}
The line integrals are approximated using the composite trapezoidal rule. Finally, the total discrete measurements for $\mathcal{L}\vf(\beta,d)$ and $\mathcal{T}\vf(\beta,d)$ are obtained by summing the contributions from both segments.


\subsubsection{Numerical Reconstruction} 
\label{Sec: Numerical Reconstruction (Vector Field)}

In Figures \ref{Fig: Smooth Gaussian without Noise}--\ref{fig:Different_thetas_Ph3}, we demonstrate the recovery of the scalar functions $f_1$ and $f_2$, components of a vector field $\vf$ across various phantoms, both with noise and without noise. 
To achieve this, we first utilize our V-line transform data $\Lc\vf$ and $\Tc\vf$ to generate longitudinal and transverse ray transforms, $\Ic\vf$ and $\Jc\vf$ (straight line transforms), via the following relation (for more details see \cite[Section~4]{bhardwaj_2024}):
\begin{align}\label{relation btw V-line and st line}
    \begin{cases}
        \Ic\vf(\psi_{\beta},t_{d}) = \Lc\vf(\beta,d) - \Lc\vf(\beta + \pi,2R-d) - \Lc \vf(\beta, 2R),\\[5pt]
        \Jc\vf(\psi_{\beta},t_{d}) = \Tc\vf(\beta,d) - \Tc\vf(\beta + \pi,2R-d) - \Tc\vf(\beta, 2R),
    \end{cases}
\end{align}
where $d \in [0,2R]$, $\beta \in [0, 2\pi)$, $\psi_{\beta} = \beta + \theta + \pi/2$, and $t_{d} = (R-d)\sin(\pi + \theta)$. 
\vspace{4mm}\\
\noindent Recall, for $\psi \in [0, 2\pi), \ p \in \mathbb{R},$ the longitudinal ray transform $\Ic\vf$ and transverse ray transform $\Jc\vf$ are given by:
\begin{equation}\label{eq:def LRT}
\mathcal{I}\vf (\psi,p) = \mathcal{I}\vf (\vw,p) := \int_\mathbb{R} \vw^\perp \cdot \vf (p \vw + s\vw^\perp)\,ds,
\end{equation}
\begin{equation}\label{eq:def TRT}
\mathcal{J}\vf (\psi,p) = \mathcal{J}\vf (\vw,p) := \int_\mathbb{R} \vw \cdot \vf (p \vw + s\vw^\perp)\,ds.
\end{equation}
These transforms can be cast in matrix form to isolate the Radon transforms of the individual scalar components:
\begin{equation}\label{eq:matrix_system}
   \begin{bmatrix}
       \mathcal{I}\vf \\
       \mathcal{J}\vf
   \end{bmatrix}(\vw, p)
   = 
     \begin{bmatrix}
         -w_2 & w_1 \\
         w_1  & w_2
     \end{bmatrix}
     \begin{bmatrix}
         \mathcal{R}f_1 \\
         \mathcal{R}f_2
     \end{bmatrix}(\vw, p), \quad \mbox{ here } \vw = (w_1, w_2) = (\cos \psi, \sin \psi).
\end{equation}
Because the transformation matrix in \eqref{eq:matrix_system} is invertible, we can point-wise solve this system to isolate the component-wise Radon transforms, $\mathcal{R}f_1$ and $\mathcal{R}f_2$. Finally, applying the inverse Radon transform yields the reconstructed scalar components $f_1$ and $f_2$. 

\noindent We present numerical experiments starting with a baseline, noise-free case. We evaluate the performance of our inversion algorithm using three distinct phantom classes defined on the domain $[-1, 1] \times [-1, 1]$. Each class introduces varying degrees of mathematical regularity and geometric complexity: smooth bump functions (Phantom 1 - PH1), characteristic functions of overlapping disks (Phantom 2 - PH2), and characteristic functions of rectangular annuli (Phantom 3 - PH3).
As a first test case, we define the components $f_1$ and $f_2$ using smooth Gaussian bump functions centered at different locations with distinct supports. The mathematical function for each bump is defined as:
\begin{equation}
    f_i(x,y) = \begin{cases}
    e^{-s^2 / \left(s^2 - \left[(x - a)^2+(y - b)^2\right]\right)}, & (x - a)^2+(y - b)^2 < s^2, \\[2pt]
    0, & (x - a)^2+(y - b)^2 \geq s^2,
    \end{cases}
\end{equation}
where $(a, b)$ denotes the center of the bump function, and $s$ represents the radius of its support. The selected parameters are summarized in Table \ref{tab:single_bump_params}. We will refer to this phantom as PH1. Later, for the tensor field, there will be one more component, which we will also call PH1.

\begin{table}[H]
    \centering
    \begin{tabular}{lccc}
        \toprule
        \textbf{Component} & \textbf{Center} $(a, b)$ & \textbf{Support Radius} ($s$) \\
        \midrule
        $f_1$  & $(0.00, 0.00)$ & $0.15$ \\
        $f_2$  & $(0.40, 0.40)$ & $0.10$ \\ 
        \bottomrule
    \end{tabular}
    \caption{Center coordinates and support radii for the components of the vector field $\vf$.}
    \label{tab:single_bump_params}
\end{table}
\noindent Figure \ref{Fig: Smooth Gaussian without Noise} displays the results for this smooth test case. The first column shows the ground-truth scalar components $f_1$ and $f_2$. The second column illustrates the simulated forward data $\Lc\vf(\beta, d)$ and $\Tc\vf(\beta,d)$ plotted against the angular variable $\beta$ (in degrees) and the distance parameter $d$. The final column displays the reconstructed components obtained via the proposed inversion method. We see that the reconstructed scalar components are really good in the absence of noise (see Table \ref{table:2} for the relative error).
\begin{figure}[H] 
\centering
{\includegraphics[width=0.85 \textwidth]{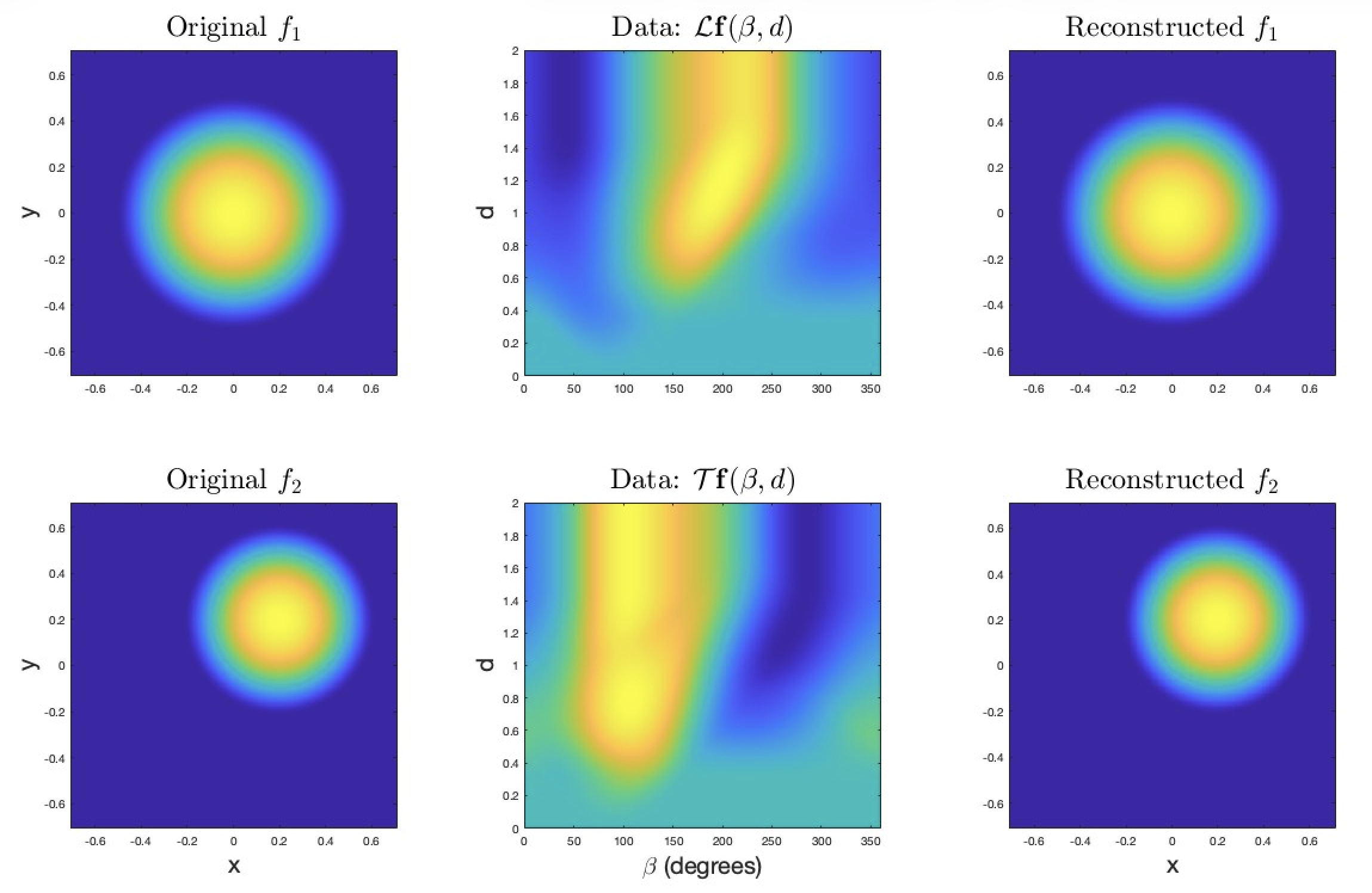}}
    \caption{Original components of the vector field \(\vf\) (first column), the associated V-line transform data \(\Lc\vf\) and \(\Tc\vf\) (second column), and the reconstructed vector field components (third column).}
    \label{Fig: Smooth Gaussian without Noise}
  \end{figure}
\noindent To investigate the stability of the proposed inversion method, we next introduce varying levels of Gaussian noise to the forward projection data. Figure \ref{fig; Reconstructed components of f with different levels of noise} illustrates the reconstruction of phantom 1 under noise levels of $5\%, 10\%$ and $20\%$. In the reconstruction, we observe that increasing the noise level introduces visible granular artifacts into the spatial domain. However, we observe that the locations, shapes, and amplitudes of $f_1$ and $f_2$ are well reconstructed. Thus, the reconstruction remains robust in the presence of noise.
\begin{figure}[H] 
\centering
{\includegraphics[width=0.9\textwidth]{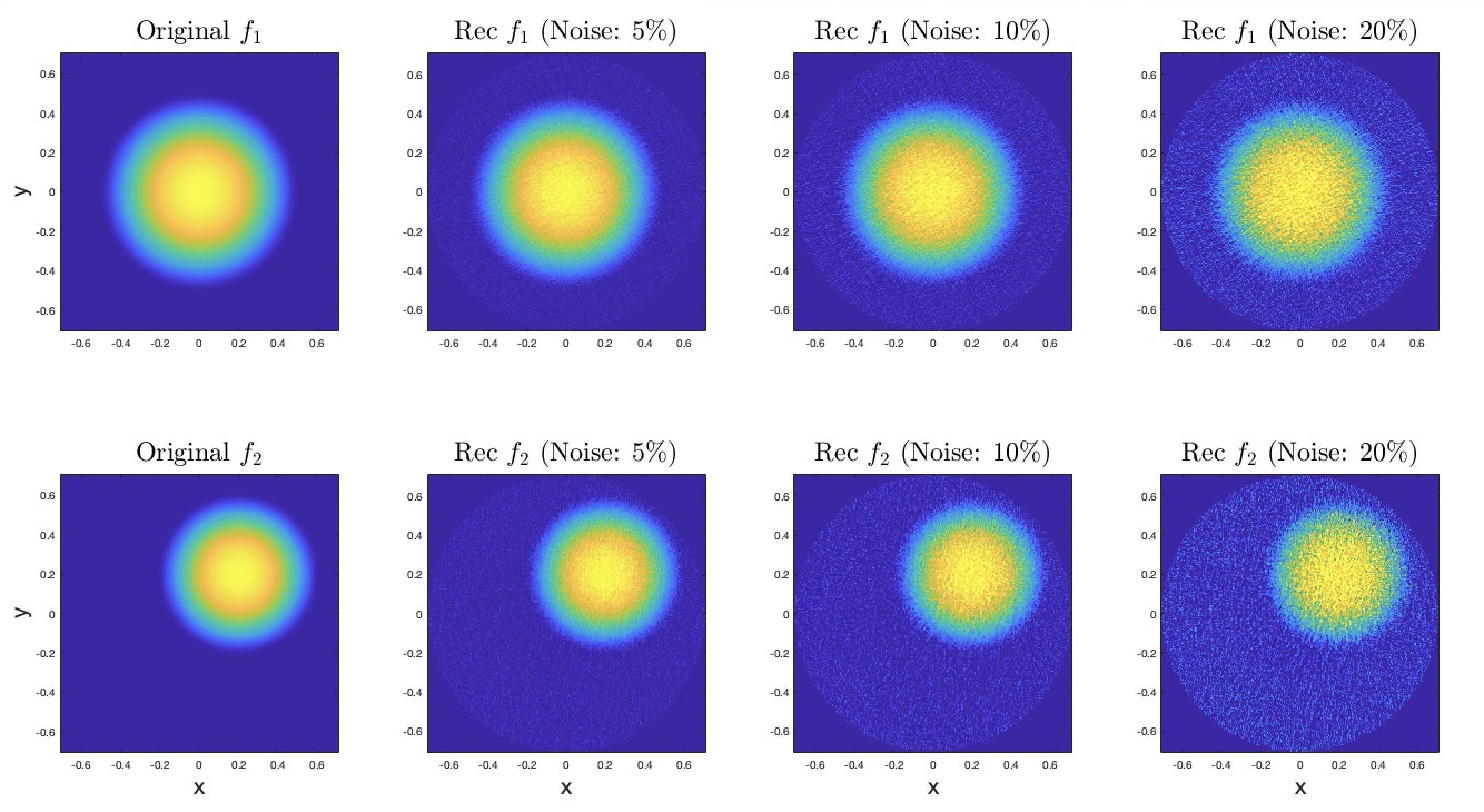}}
    \caption{Reconstructed components of $\vf$ with different levels of noise.}\label{fig; Reconstructed components of f with different levels of noise}
  \end{figure}

 \noindent We now consider another phantom, which we refer to as Ph2. In this class, each component of the phantom is represented as a weighted combination of three characteristic functions of disks having different radii $r$ and centers $(a,b)$. The values of these parameters are given in Table~\ref{tab:compact_multidisk_params} : 
\begin{table}[htbp]
    \centering
    \begin{tabular}{@{}lccc@{}}
        \toprule
        \textbf{Components} & \textbf{Centers ($a, b$)} & \textbf{Radii ($r$)} & \textbf{Intensities ($val$)} \\[4pt]
        \midrule
        $f_1$ & $(-0.2, 0.1)$, $(0.15, 0.15)$, $(0.0, -0.15)$ & $0.25, 0.3, 0.3$ & $3, 3.5, 4$ \\[4pt]
        $f_2$ & $(0, 0.2)$, $(0.2, -0.1)$, $(-0.2, -0.15)$ & $0.2, 0.25, 0.35$ & $2, 5, 1.5$ \\[4pt]
        \bottomrule
    \end{tabular}
      \caption{Parameter sets are ordered sequentially corresponding to Disks $[1, 2, 3]$.}
      \label{tab:compact_multidisk_params}
\end{table}

\noindent From Figure \ref{fig: overlapping circle}, it is evident that the method handles the sharp discontinuities of the phantom with minimal visual artifacts.
\begin{figure}[H] 
\centering
{\includegraphics[width=0.7\textwidth]{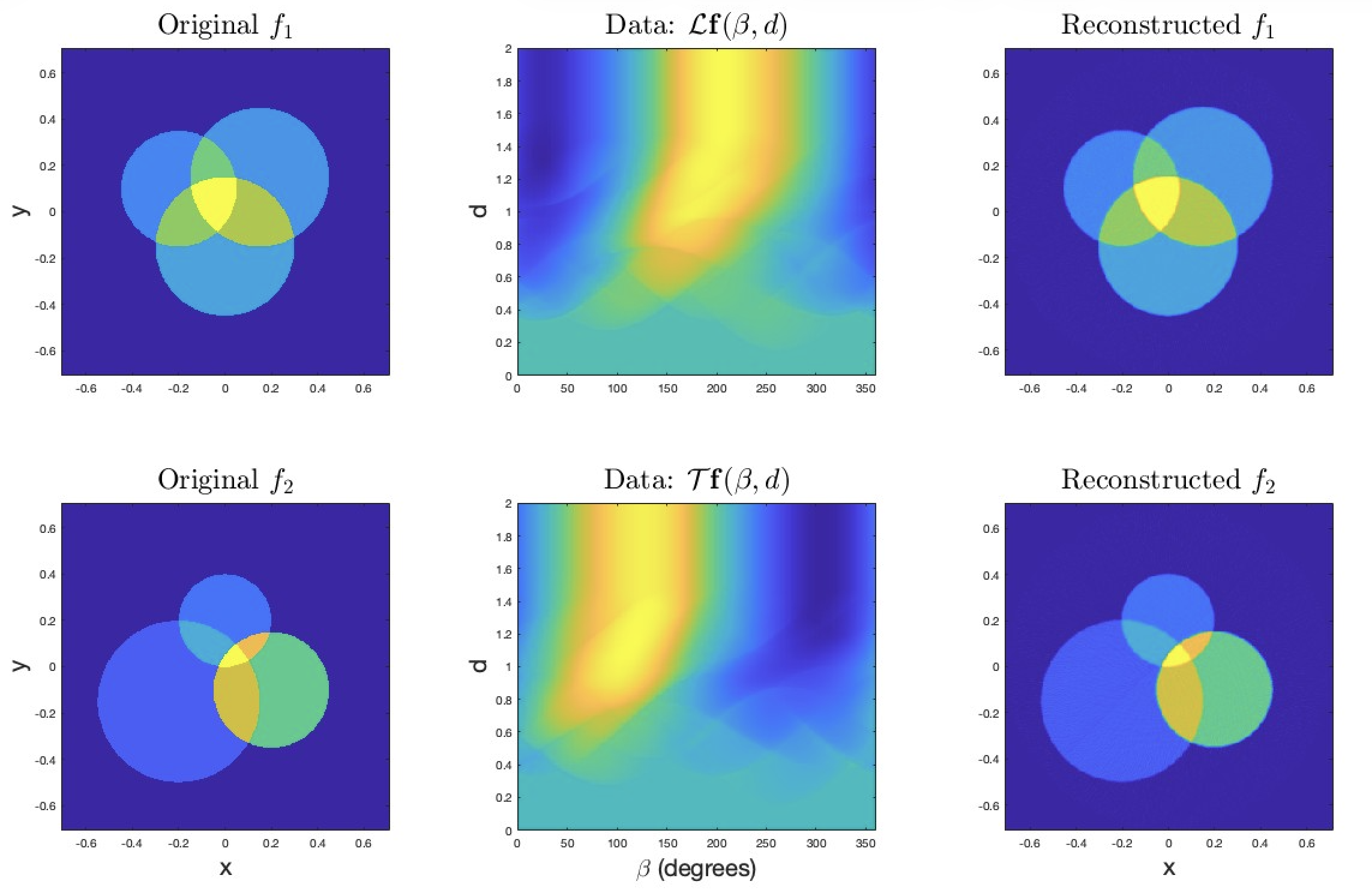}}
    \caption{Original components of the vector field \(\vf\) (first column), the associated V-line transform data \(\Lc\vf\) and \(\Tc\vf\) (second column), and the reconstructed vector field components (third column).}
    \label{fig: overlapping circle}
  \end{figure}
  
 \noindent Extending the validation of the inversion scheme, Figure \ref{fig: Reconstructed components of f with different levels of noise} evaluates the numerical stability of the algorithm against data perturbation by reconstructing the piecewise constant field from noisy projection. As in the previous cases, the leftmost column shows the original components ($f_1$ and $f_2$), while the subsequent columns show the reconstructed fields after corrupting the transform data with $5\%, 10\%, $ and $20\%$ noise, respectively. We observe that reconstructed components have prominent granular variations and distinct circular boundary artifacts. But the numerical implementation also shows robustness, as it can capture the fundamental features even under severe $20 \%$ noise conditions. 
\begin{figure}[H] 
\centering
{\includegraphics[width=0.8\textwidth]{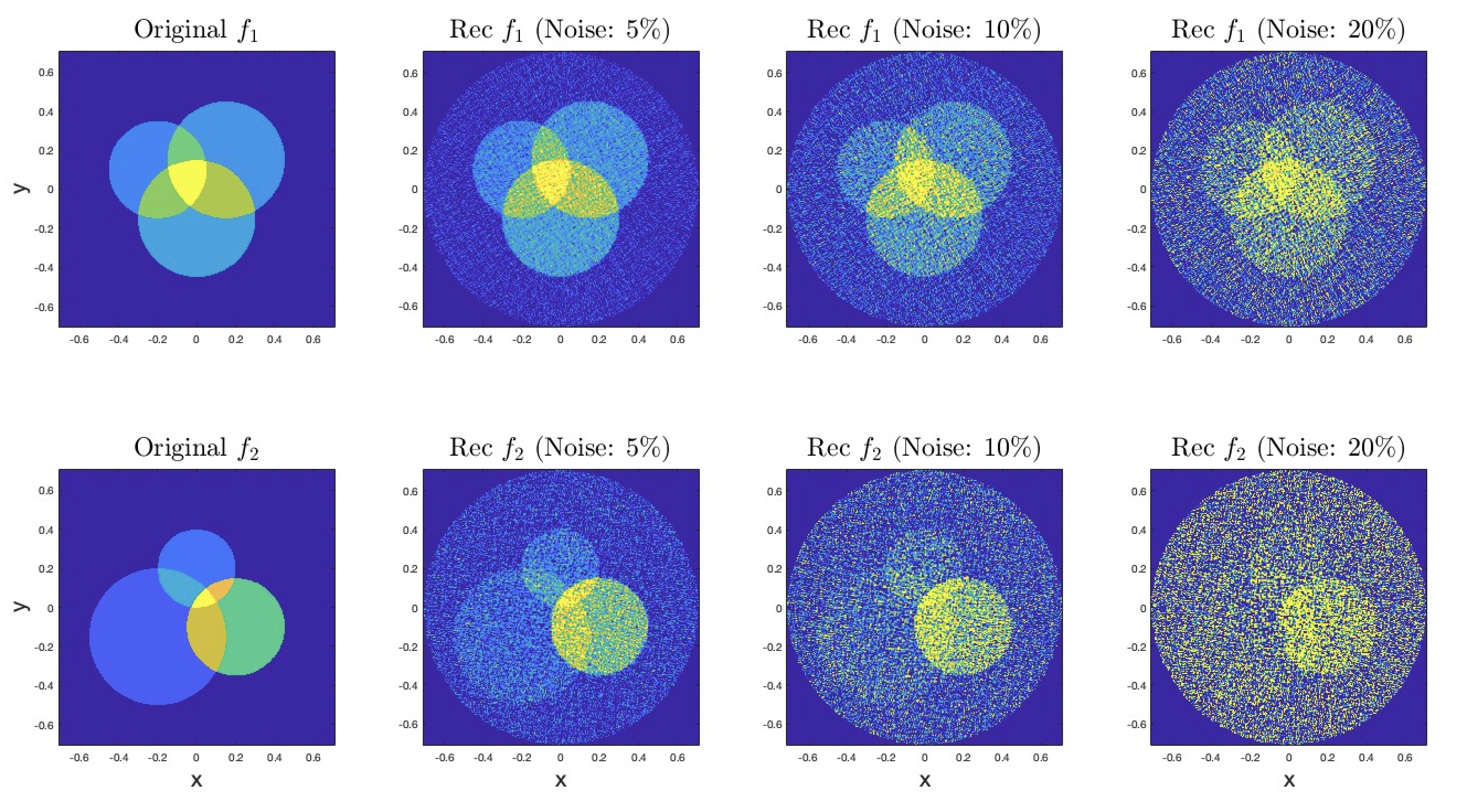}}
    \caption{Reconstructed components of $\vf$ with different levels of noise.}
    \label{fig: Reconstructed components of f with different levels of noise}
  \end{figure}

\noindent Further expanding on the validation of the inversion framework, we consider a vector field characterized by non-convex, piecewise-constant geometries, specifically rotated square-ring structures. 
This last class of phantoms consists of square annuli with identical dimensions, centered at different locations, and rotated at different angles with respect to the $x$-axis. We will refer to this phantom as Ph3. For details about the locations and rotations, please refer to the Table \ref{tab:phantom_params}. 
\begin{table}[htbp]
    \centering
    \begin{tabular}{@{}lccc@{}}
        \toprule
        \textbf{Components} & \textbf{Center ($a, b$)} & \textbf{Rotation angle} & \textbf{Intensity ($val$)} \\
        \midrule
        $f_1$ & $(0.00, 0.00)$ & $10^\circ$ & $1.00$ \\
        $f_2$ & $(-0.10, -0.10)$ & $45^\circ$ & $0.95$ \\
        \bottomrule
    \end{tabular}    \caption{Phantom Position and Orientation Parameters}
    \label{tab:phantom_params}
\end{table}

\noindent Table \ref{table:2} summarizes the relative errors in reconstructing $f_1$ and $f_2$ from V-line data $\mathcal{L} \vf$ and $\mathcal{T}\vf$ for the three phantoms (PH1--PH3) under different noise levels. The results show that the reconstruction error increases as the noise level rises.  Figure \ref{fig: square} demonstrates the algorithm's ability to accurately reconstruct the components of a vector field defined on the phantom, which have sharp corners and hollow interiors. 

\begin{figure}[H] 
\centering
{\includegraphics[width=0.8\textwidth]{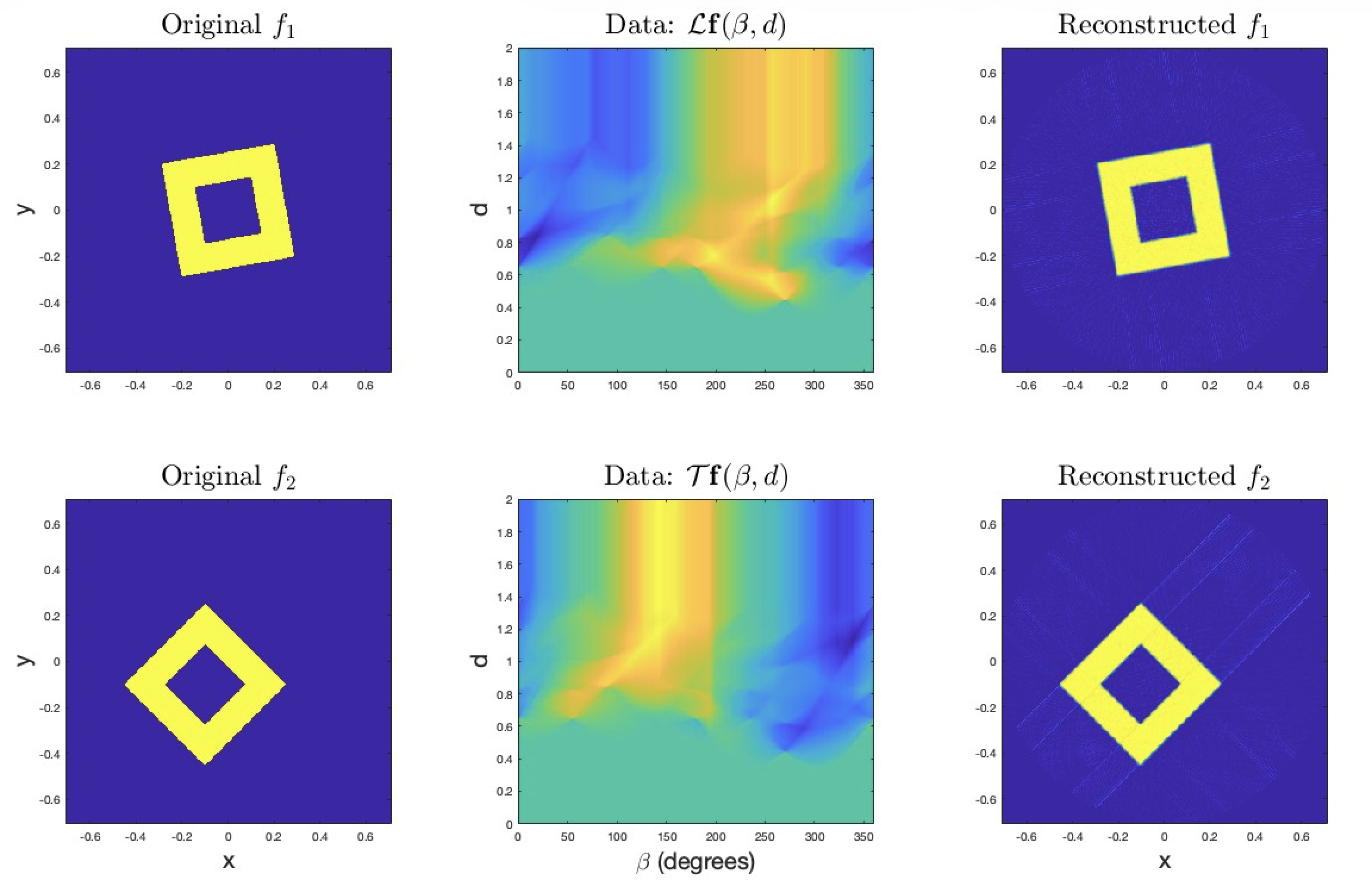}}
\caption{Original components of the vector field $\vf$ (first column), the associated V-line transform data $\Lc\vf$ and $\Tc\vf$ (second column), and the reconstructed vector field components (third column).}
\label{fig: square}
\end{figure}

\noindent Further, Figure \ref{fig: square with noise} shows the successful reconstruction of the non-convex square ring phantom. This figure evaluates the algorithm's stability by introducing $5 \%, 10 \%$ and $20 \%$ noise to the projected data. 
  
\begin{figure}[H] 
\centering
{\includegraphics[width=0.85\textwidth]{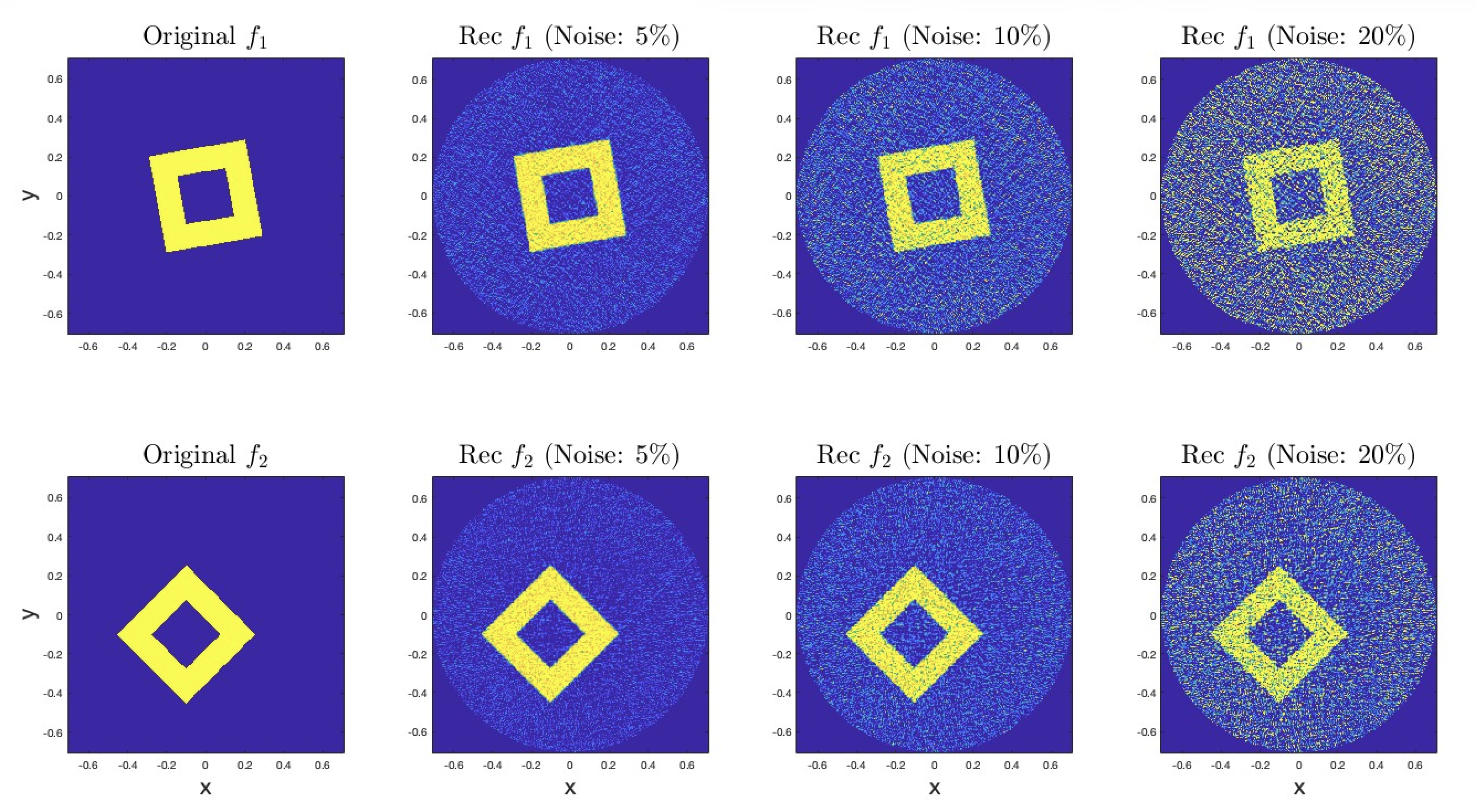}}
\caption{Reconstructed components of $\vf$ with different levels of noise.}
\label{fig: square with noise}
\end{figure}

\begin{table}[h!]
\centering 
\begin{tabular}{ |p{1.6 cm}||p{0.6cm}||p{1.6cm}|p{1.6cm}|p{1.7cm}|p{1.7cm}|  }
 \hline
Phantoms & $\vf$ & No noise & 5\% noise  & 10\% noise & 20\% noise\\[3pt]
 \hline
PH1   & $f_1$   & 1.29\% &	5.33\%	&  10.40\%	& 20.82\%\\[3pt]
 \hline
PH1   &  $f_2$  & 1.57\% &	8.38\%	& 16.46\%	& 33.04\%\\[3pt]
 \hline
 PH2  &  $f_1$  & 8.29\%	& 31.72\%	& 62.25\%	& 123.26\%\\[3pt]
  \hline
 PH2    & $f_2$ & 9.52\%	& 60.84\%	& 120.30\%	& 239.77\%\\[3pt]
 \hline 
 PH3  &  $f_1$ & 15.62\%	& 39.04\%	& 72.89\%	& 143.92\%\\[3pt]
  \hline
 PH3    & $f_2$ & 16.37\% &	31.26\%	& 55.52\%	& 106.95\%\\[3pt]
 \hline 
\end{tabular}
 \caption{Relative errors in the reconstructions of $f_1$ and $f_2$ from $\Lc\vf$ and $\Tc\vf$.}
\label{table:2}
\end{table}
\noindent 
Table \ref{table:2} summarizes the relative errors in reconstructing $f_1$ and $f_2$ from V-line data $\mathcal{L} \vf$ and $\mathcal{T}\vf$ for the three phantoms (PH1--PH3) under different noise levels. The results show that the reconstruction error increases as the noise level rises. Next, we present a brief comparison of reconstructions from different breaking angles $\theta$. Recall that this angle $\theta$ directly dictates the observable field of view (FOV), as seen from the theoretical results  (Theorem~\ref{th:full data recovery}) that the reconstruction works if the unknown field is supported in a disk of radius $R \sin(\theta)$. Consequently, small angles (e.g., $15^\circ$) yield a severely restricted FOV; any part of the vector field outside this small central region suffers from severe truncation artifacts. In contrast, as the opening angle widens (e.g., $75^\circ$), the valid mathematical FOV expands to encompass nearly the entire domain, allowing full reconstruction. We have presented this comparison for Phantom 2 (see Figure \ref{fig:Different_thetas_Ph2}) and Phantom 3 (see Figure \ref{fig:Different_thetas_Ph3}) with $\theta$ taking the values $15^\circ$, $30^\circ$, $45^\circ$, $60^\circ,$ and $75^\circ$. 
  \begin{figure}[H] 
\centering
{\includegraphics[width=1 \textwidth]{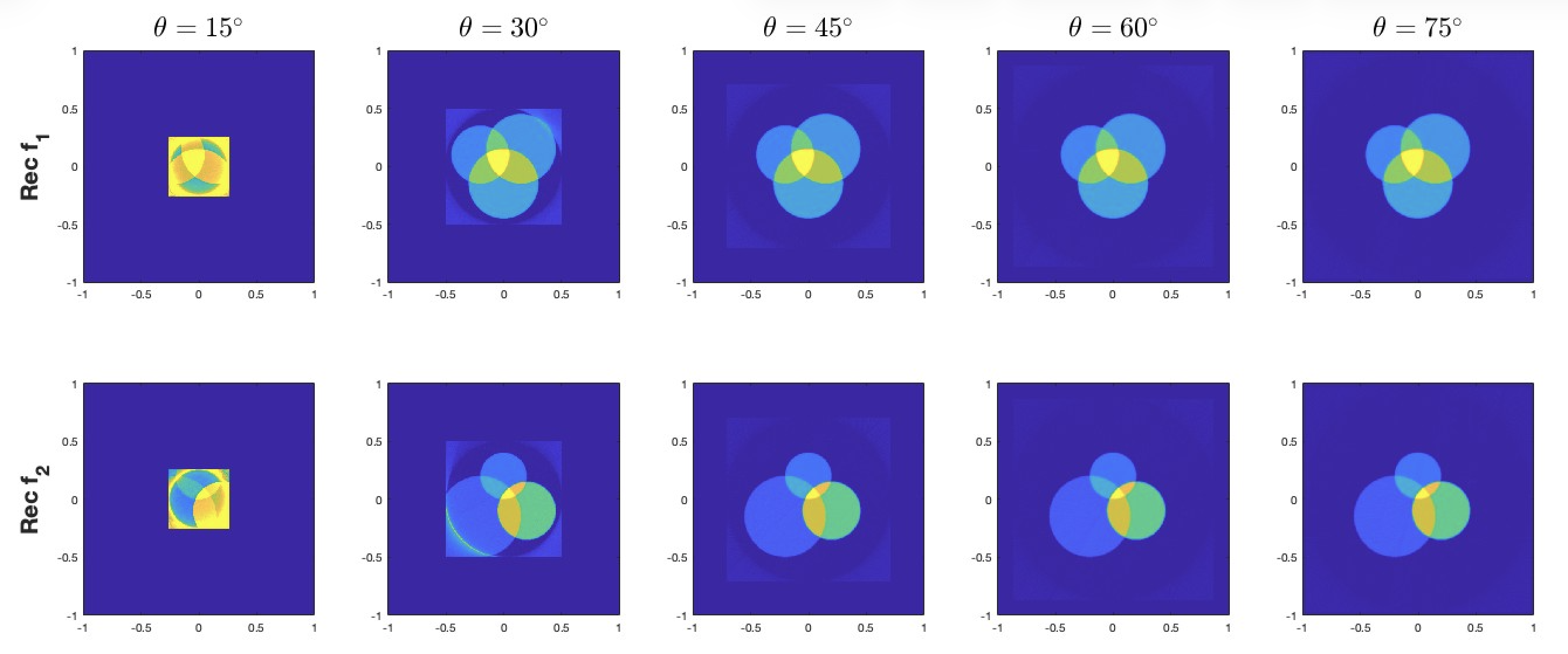}}
    \caption{Reconstructed components of $\vf$ (Phantom 2) for different angles ($\theta = 15^\circ$ to $75^\circ$).}\label{fig:Different_thetas_Ph2}
  \end{figure}
  \begin{figure}[H] 
\centering
{\includegraphics[width=1 \textwidth]{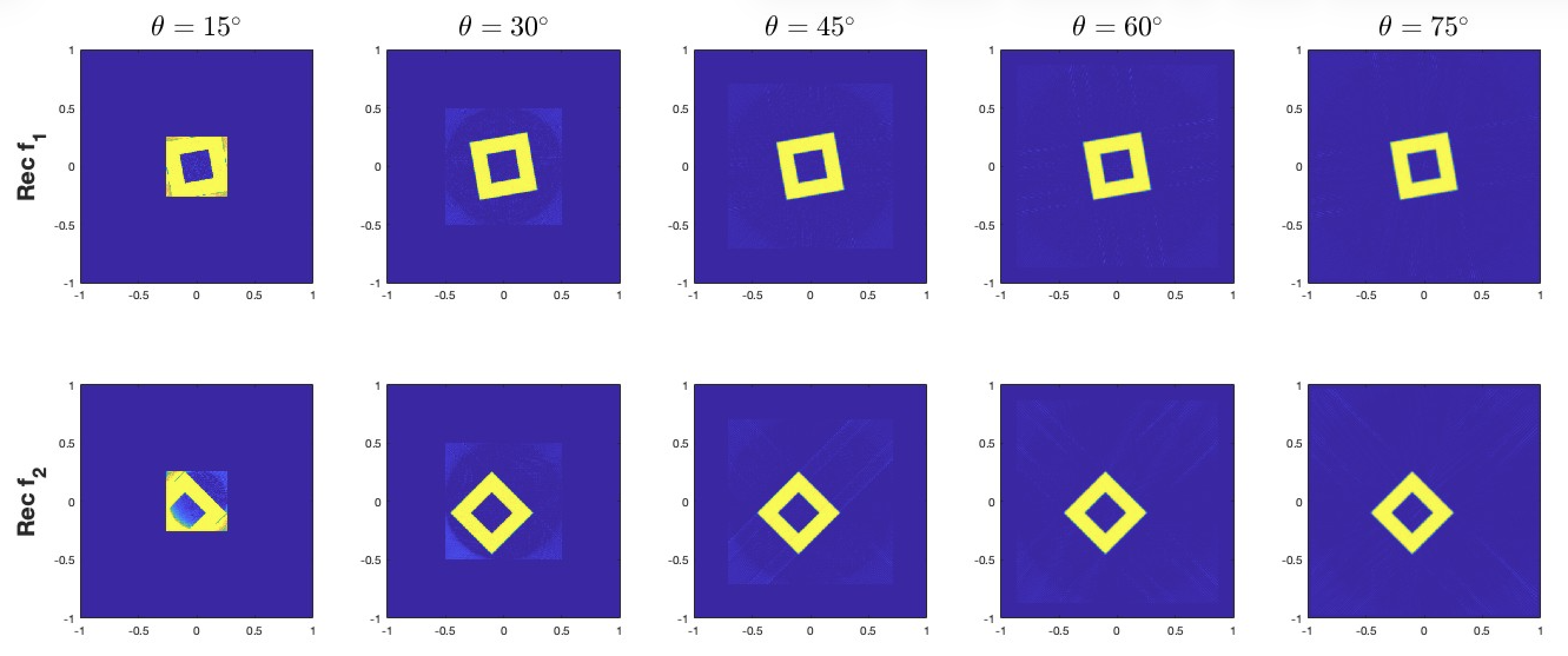}}
    \caption{Reconstructed components of $\vf$ (Phantom 3) for different angles ($\theta = 15^\circ$ to $75^\circ$).}\label{fig:Different_thetas_Ph3}
  \end{figure}  
\subsection{Reconstruction of Tensor Fields}\label{Subsec: Rec Tensor}

To generate the data, we will follow the same procedure as discussed in subsection \ref{Sec: Reconstruction of vector field}. The only difference is that we will now consider the three components of $\vf =(f_{11}, f_{12},f_{22})$, and we compute $\Lc\vf$, $\Mc\vf$, and $\Tc\vf$. Now the goal is to recover $f_{11}, f_{12}$ and $f_{22}$  from the given $\Lc\vf$, $\Mc\vf$, and $\Tc\vf$.
Figures [\ref{fig:Rec_tensor_without_noise_Ph2}-\ref{fig:Rec_tensor_with_noise_Ph3}], demonstrates the recovery of the components $f_{11}$, $f_{12}$, and $f_{22}$ of a tensor field $\vf$ across various phantoms, both with and without noise. 
To achieve this, we follow a similar approach analogous to the vector field case, i.e., we first compute the following formulas to get $\Ic\vf$, $\Kc\vf$, and $\Jc\vf$ using $\Lc\vf$, $\Mc\vf$, and $\Tc\vf$:
\begin{align*}\label{eq;:relation btw V-line and st line transform for tensor field}
        \begin{cases}
            \Ic\vf{{(\psi_{\beta},t_{d})}} = \Lc\vf(\beta,d) +  \Lc\vf(\beta + \pi,2R-d) - \Lc \vf(\beta, 2R),\\[4pt]
            \Kc\vf(\psi_{\beta},t_{d}) = \Mc\vf(\beta,d) -\Mc\vf(\beta + \pi,2R-d) - \Mc\vf({\beta,2R}),\\[4pt]
            \Jc\vf(\psi_{\beta},t_{d}) = \Tc\vf(\beta,d) +  \Tc\vf(\beta + \pi,2R-d) - \Tc\vf(\beta, 2R),
        \end{cases}
    \end{align*}
    where  $d\in[0,2R]$, $\beta \in [0, 2\pi)$, $\psi_{\beta} = \beta + \theta + \pi/2 \ \mbox { and } \  t_{d} = (R-d)\sin(\pi + \theta)$.
For $\psi \in [0, 2\pi) \mbox { and } p \in \mathbb{R}$, we recall the longitudinal ray transform $\mathcal{I}\vf$, mixed ray transform $\mathcal{K}\vf$ and transverse ray transform $\mathcal{J}\vf$, which are defined by
\begin{equation*}\label{eq:def LRT for tensor}
\mathcal{I}\vf (\psi,p) = \mathcal{I}\vf (\vw,p) := \int_\mathbb{R} \left\langle\vf (p \vw + s\vw^\perp), (\vw^\perp)^{2} \right\rangle\,ds,
\end{equation*}
\begin{equation*}\label{eq:def MRT}
\mathcal{K}\vf (\psi,p) = \mathcal{K}\vf (\vw,p) := \int_\mathbb{R} \left\langle \vf (p \vw + s\vw^\perp), \vw^{\perp}\odot\vw\right\rangle\,ds,
\end{equation*} 
\begin{equation*}\label{eq:def TVT for tensor}
\mathcal{J}\vf (\psi,p) = \mathcal{J}\vf (\vw,p) := \int_\mathbb{R} \left\langle \vf (p \vw + s\vw^\perp), \vw^{2} \right\rangle\,ds,
\end{equation*}
respectively.
Observe that, 
\vspace{-5 mm}
\begin{equation*}
    \begin{aligned}
        \begin{bmatrix}
           \mathcal{I} \vf \\  
           \mathcal{K}\vf \\
           \mathcal{J}\vf 
           \end{bmatrix}
           =   
           \overbrace{
           \begin{bmatrix}
               w_2^2 & - \frac{w_1 w_2 + w_2 w_1}{2} & w_1^2 \\
               -w_2 w_1 & \frac{w_!^2 - w_2 ^2}{2} & w_! w_2 \\
               w_1^2 & \frac{w_1 w_2 + w_1 w_2}{2} & w_2^2 
           \end{bmatrix}}^{W}
           \begin{bmatrix}
               \mathcal{R}f_{11} \\
               \mathcal{R}f_{12} \\
               \mathcal{R}f_{22}
           \end{bmatrix}
    \end{aligned}
\end{equation*}
Note that matrix $W$
is invertible, so we can solve this system of equations to get $\mathcal{R} f_{11}$, $\mathcal{R} f_{12}$, and $\mathcal{R} f_{22}$, and further applying the inverse Radon transform, we can get the components of $\vf$.

To numerically validate the results, we implement the algorithm as in the earlier vector field case. To avoid repetition, we show the reconstruction for the phantom Ph2 and Ph3, with and without noise. Other cases can also be handled analogously. The parameters for these phantoms are given in Table \ref{tab:compact_multidisk_params_tensor_field} and \ref{tab:phantom_params_tensor_field}.
\begin{table}[H]
    \centering
    \begin{tabular}{@{}lccc@{}}
        \toprule
        \textbf{Components} & \textbf{Centers ($a, b$)} & \textbf{Radii ($r$)} & \textbf{Intensities ($val$)} \\
        \midrule
        $f_1$/$f_{11}$ & $(-0.2, 0.1)$, $(0.15, 0.15)$, $(0.0, -0.15)$ & $0.25, 0.3, 0.3$ & $3, 3.5, 4$ \\
        $f_2$/$f_{12}$ & $(0, 0.2)$, $(0.2, -0.1)$, $(-0.2, -0.15)$ & $0.2, 0.25, 0.35$ & $2, 5, 1.5$ \\
        $f_{22}$& $(0, 0.2)$, $(-0.1, -0.1)$, $(0.1, -0.05)$ & $0.15, 0.2, 0.25$ & $6, 4, 2.5$ \\
    \bottomrule
    \end{tabular}
      \caption{Parameter sets are ordered sequentially corresponding to Disks $[1, 2, 3]$.}
\label{tab:compact_multidisk_params_tensor_field}
\end{table}
\begin{table}[H]
    \centering
    \begin{tabular}{@{}lccc@{}}
        \toprule
        \textbf{Components} & \textbf{Center ($a, b$)} & \textbf{Rotation angle} & \textbf{Intensity ($val$)} \\
        \midrule
        $f_1$/$f_{11}$ & $(0.00, 0.00)$ & $10^\circ$ & $1.00$ \\
        $f_2$/$f_{12}$ & $(-0.10, -0.10)$ & $45^\circ$ & $0.95$ \\
        $f_{22}$ & $(0.00, 0.05)$ & $160^\circ$ & $1.00$ \\
        \bottomrule
    \end{tabular}    
    \caption{Phantom position and orientation parameters}
\label{tab:phantom_params_tensor_field}
\end{table}

Figure \ref{fig:Rec_tensor_without_noise_Ph2} and \ref{fig:Rec_tensor_Ph3_without_noise} shows the reconstruction of the $f_{11}, f_{12}$ and $f_{22}$ for Ph2 and Ph3 in the absence of noise. This shows that the algorithm can reconstruct sharp discontinuities for both convex and non-convex geometries with almost no artifacts. Figure \ref{fig:Rec_tensor_with_noise_Ph2} and \ref{fig:Rec_tensor_with_noise_Ph3} show that we obtain a nice reconstruction of the components of $\vf$ in the presence of $5 \%, 10 \%$ and $20 \%$ noise. There are sufficient granular artifacts but it can capture the basic features despite having $20 \%$ noise in the data. 
Table \ref{table;2} gives the relative error for various cases, including PH2 and PH3 with and without noise.
\begin{table}[h!] 
\centering 
\begin{tabular}{ |p{1.6 cm}||p{0.6cm}||p{1.6cm}|p{1.6cm}|p{1.7cm}|p{1.7cm}|  }
 \hline
Phantoms & $\vf$ & No noise & 5\% noise  & 10\% noise & 20\% noise\\[3pt]
 \hline
PH2   & $f_{11}$   & 9.88\% &	22.47\%	&  41.24\%	& 81.51\%\\[3pt]
 \hline
PH2   &  $f_{12}$  & 11.90\% &	26.71\%	& 49.22\%	& 95.02\%\\[3pt]
 \hline
 PH2  &  $f_{22}$  & 13.19\%	& 49.83\%	& 98.28\%	& 193.69\%\\[3pt]
  \hline
 PH3    & $f_{11}$ & 17.50\%	& 36.71\%	& 65.60\%	&   128.51\%\\[3pt]
 \hline 
 PH3  &  $f_{12}$ &  18.98\%	& 23.52\%	& 33.59\%	& 58.87\%\\[3pt]
  \hline
 PH3    & $f_{22}$ &  19.19\% & 27.82\%	& 44.35\%	& 81.59\%\\[3pt]
 \hline 
\end{tabular}
 \caption{Relative errors in the reconstructions of $f_{11}$, $f_{12}$, and $f_{22}$ from $\Lc\vf$, $\Tc\vf$, and $\Mc\vf$.}
\label{table;2}
\end{table}

  \begin{figure}[H] 
\centering
{\includegraphics[width= 0.7 \textwidth]{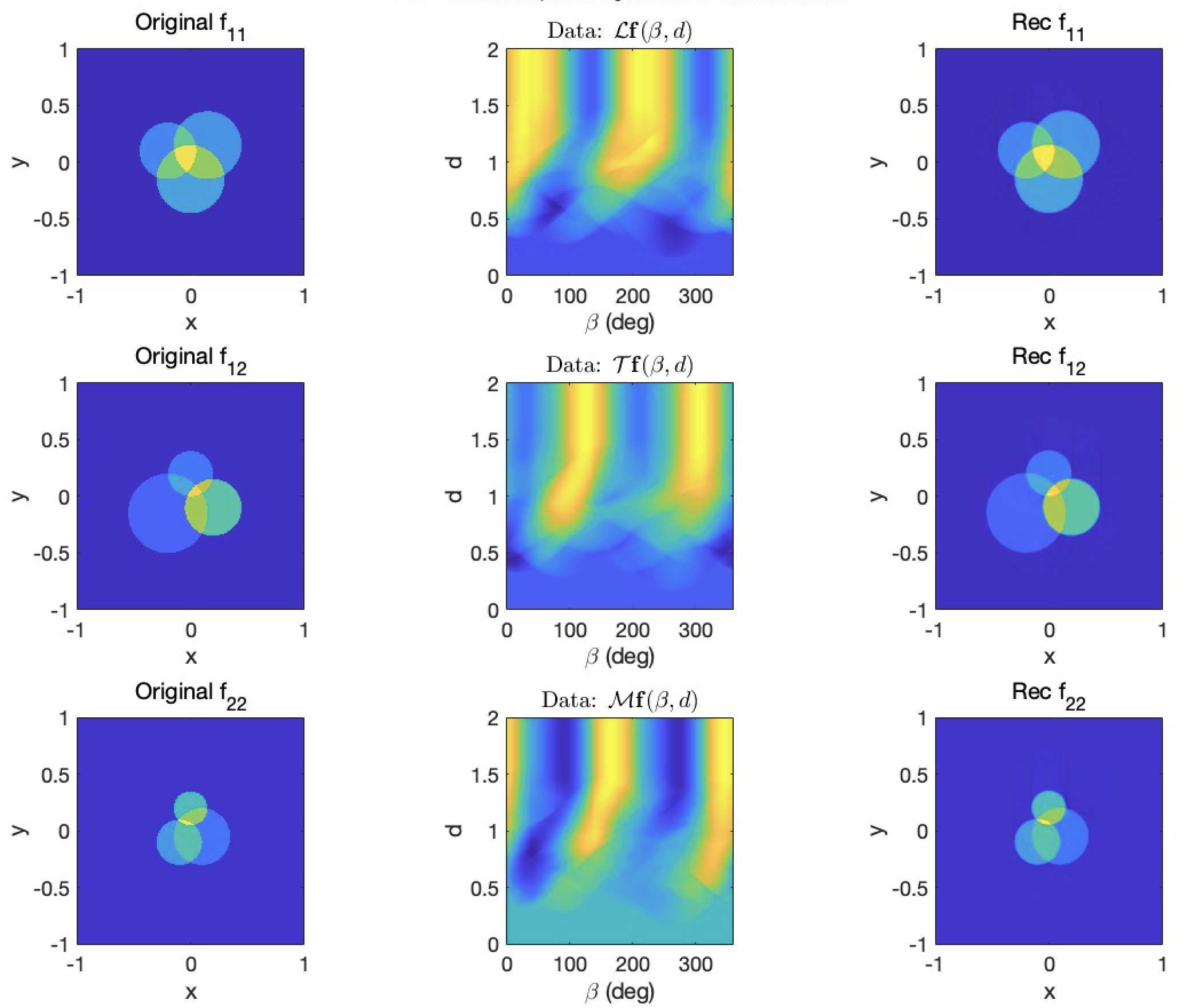}}
    \caption{Reconstructed 2-tensor field $\vf$ (Phantom 2) from $\Lc\vf$, $\Tc \vf$, and $\Mc \vf$.}
    \label{fig:Rec_tensor_without_noise_Ph2}
  \end{figure}
  \begin{figure}[H] 
\centering
{\includegraphics[width=  0.70 \textwidth]{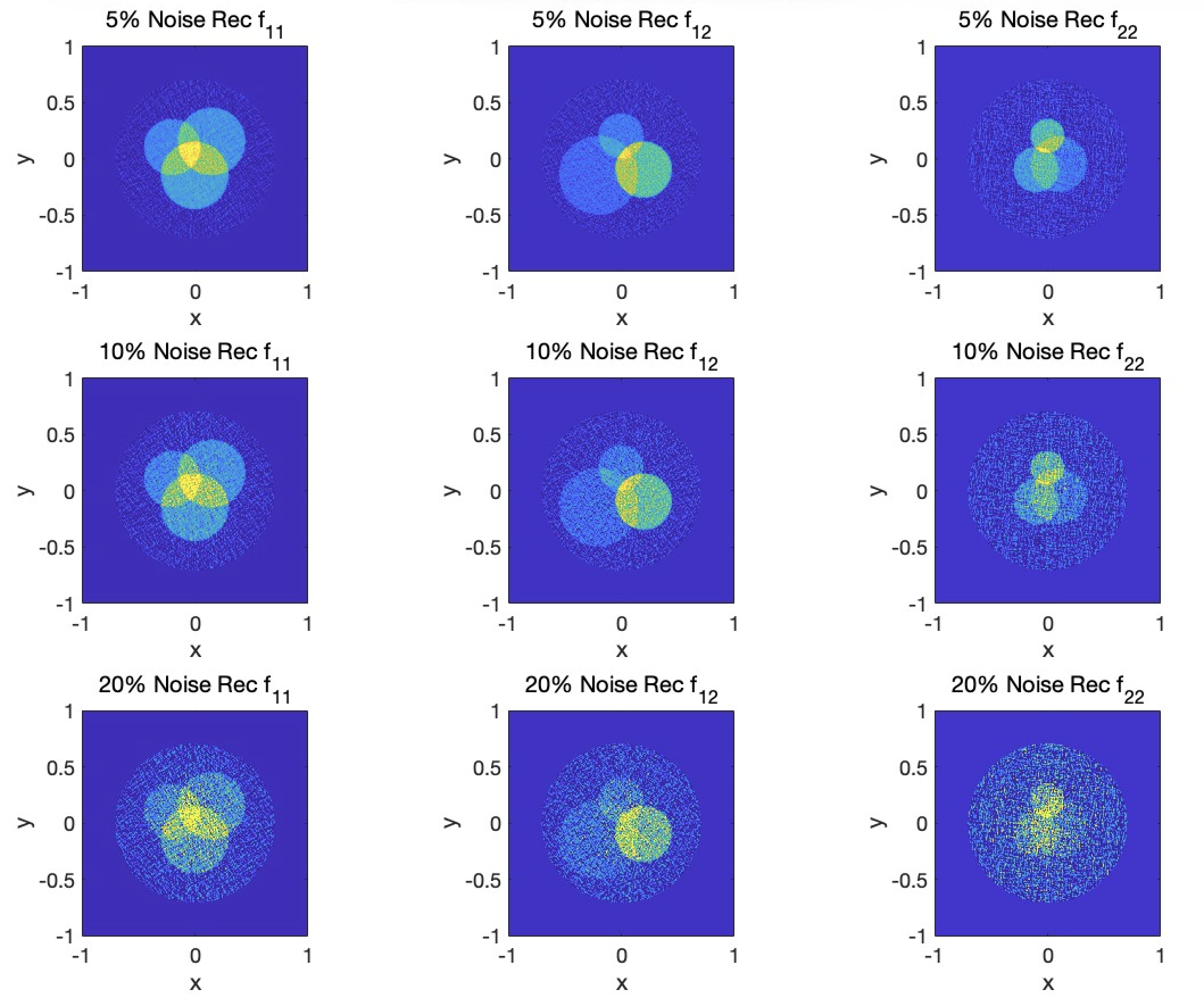}}
    \caption{Reconstructed 2-tensor field $\vf$ (Phantom 2) from $\Lc\vf$, $\Tc \vf$, and $\Mc \vf$ with noisy data.}
    \label{fig:Rec_tensor_with_noise_Ph2}
  \end{figure}
\vspace{-3 mm}
  \begin{figure}[H] 
\centering
{\includegraphics[width= .7 \textwidth]{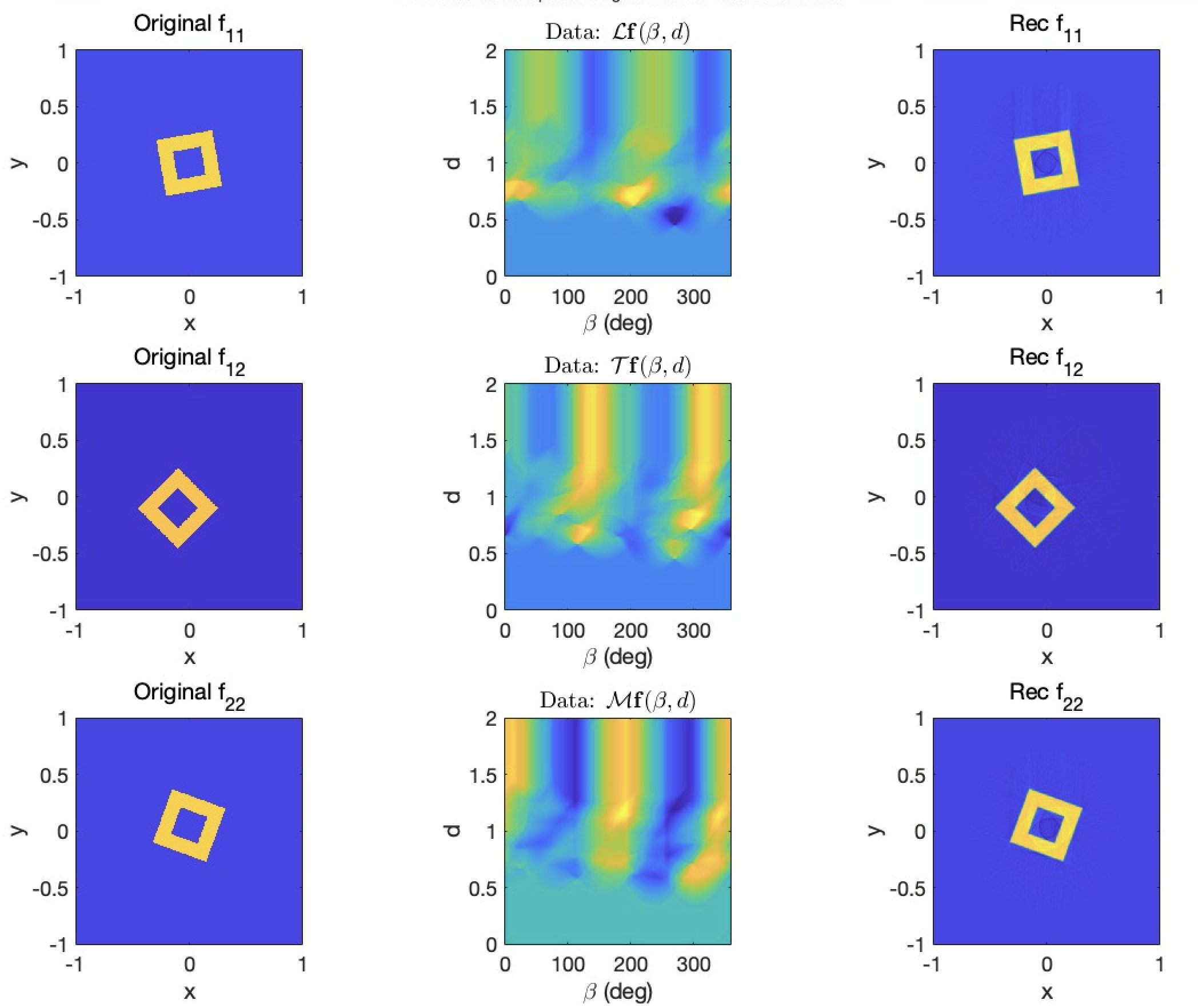}}
    \caption{Reconstructed 2-tensor field $\vf$ (Phantom 3) from $\Lc\vf$, $\Tc \vf$, and $\Mc \vf$.}\label{fig:Rec_tensor_Ph3_without_noise}
  \end{figure}
  \vspace{-3 mm}
  \begin{figure}[H] 
\centering
{\includegraphics[width= .8 \textwidth]{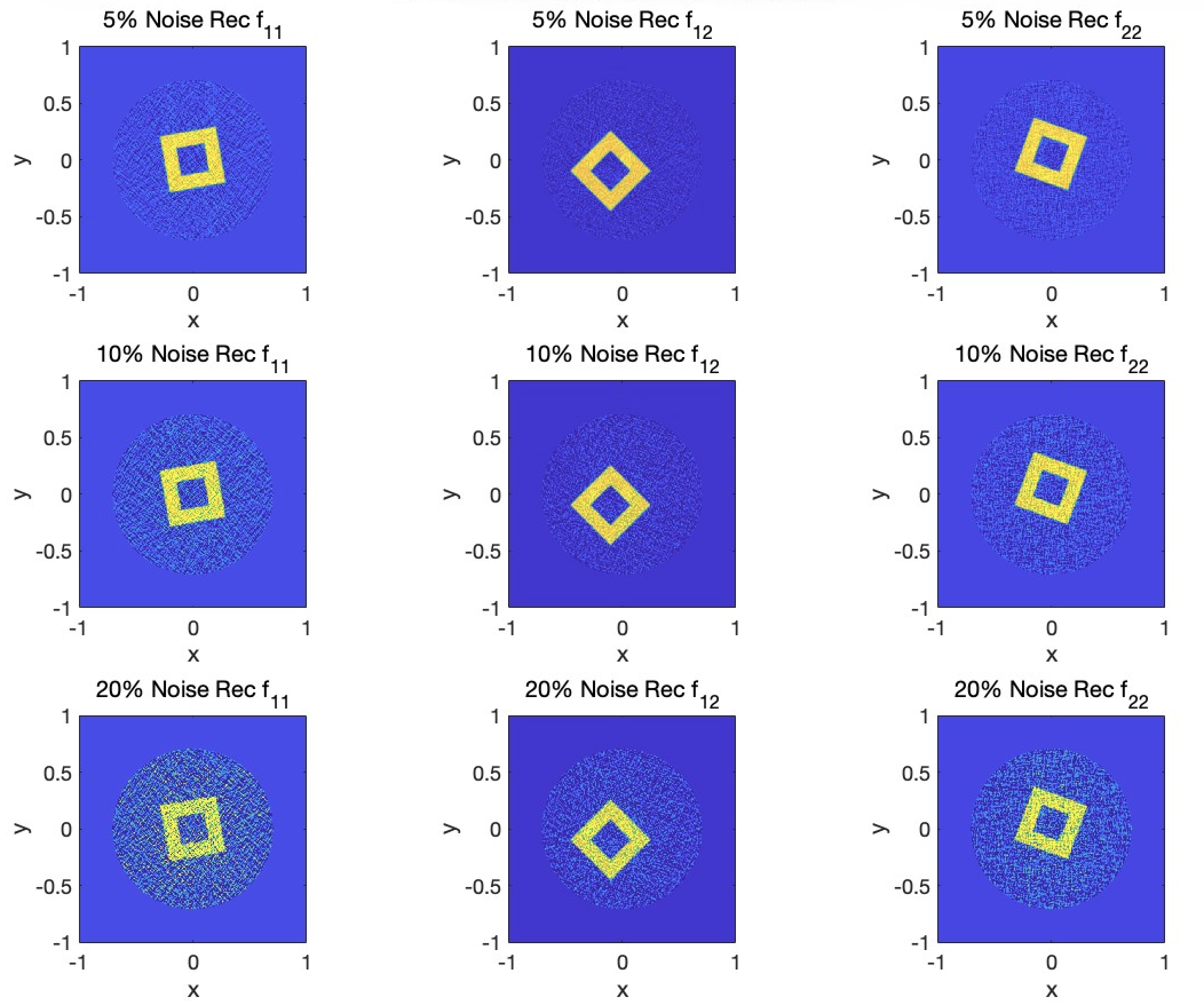}}
    \caption{Reconstructed 2-tensor field $\vf$ (Phantom 3) from $\Lc\vf$, $\Tc \vf$, and $\Mc \vf$ with noisy data.}\label{fig:Rec_tensor_with_noise_Ph3}
  \end{figure}
\vspace{- 2mm}
\section{Acknowledgements}\label{sec:acknowledgements}

Rahul Bhardwaj gratefully acknowledges the partial financial support provided by the FIST programme of the Department of Science and Technology (DST), Government of India, under Grant No.~SR/FST/MS-I/2018/22(C). 

\vspace{.25cm}

    
    
    


 \noindent \textbf{Data availability statement.} \ No datasets were generated or analyzed during the current study; therefore, data sharing is not applicable.

     \vspace{.25cm}
 \noindent \textbf{Conflict of interest.} \
 The authors declare that they have no conflicts of interest regarding the research, authorship, and/or publication of this article.
\bibliographystyle{alpha}
\bibliography{references}

\end{document}